\newtheorem{theorem}{Theorem}[section]
\newtheorem{proposition}[theorem]{Proposition}
\newtheorem{corollary}[theorem]{Corollary}
\newtheorem{lemma}[theorem]{Lemma}
\newcommand{\mycomment}[1]{}
  \theoremstyle{definition}
\newtheorem{remark}{Remark}
\newcommand{\Z}{\mathbb{Z}}
\newcommand{\T}{\mathscr T}
\newcommand{\nbeq}{\begin{equation}}
\newcommand{\neeq}{\end{equation}}
\newcommand{\beq}{\begin{equation*}}
\newcommand{\eeq}{\end{equation*}}
\DeclareMathOperator{\cd}{cd}
\DeclareMathOperator{\vcd}{vcd}
\DeclareMathOperator{\cdfin}{\underline{cd}}
\DeclareMathOperator{\gdfin}{\underline{gd}}
\DeclareMathOperator{\Mod}{Mod}
\DeclareMathOperator{\PMod}{PMod}
\newcommand{\modgn}{\Mod_{g}^{n}}
\newcommand{\Tgn}{\T_{g}^{n}}
\DeclareMathOperator{\Nte}{\text{\sc N}}
\DeclareMathOperator{\Sur}{\text{\sc S}}
\begin{document}

\title[]{The proper geometric dimension of the mapping class group of an  orientable surface with punctures}

\author[N. Colin]{Nestor Colin}
\author[R. Jiménez Rolland]{Rita Jiménez Rolland}
\author[P. L. León Álvarez ]{Porfirio L. León Álvarez}
\address{Instituto de Matemáticas, Universidad Nacional Autónoma de México. Oaxaca de Juárez, Oaxaca, México 68000}
\email{rita@im.unam.mx}
\email{ncolin@im.unam.mx}
\email{porfirio.leon@im.unam.mx}

\author[L. J. Sánchez Saldaña]{Luis Jorge S\'anchez Salda\~na}
\address{Departamento de Matemáticas, Facultad de Ciencias, Universidad Nacional Autónoma de México}
\email{luisjorge@ciencias.unam.mx}


\subjclass{57K20, 20J05, 20F65.}


\date{}

\keywords{Mapping class groups, proper geometric dimension, classifying space for proper actions}

\begin{abstract}
  We show that the {\it full} mapping class group of any orientable closed surface with punctures  admits a cocompact classifying space for proper actions of dimension equal to its virtual cohomological dimension. This was proved for closed orientable surfaces and for {\it pure} mapping class groups by Aramayona and Martínez Pérez.  
 As a consequence of our result we also obtain the proper geometric dimension of {\it full} spherical braid groups.
\end{abstract}
\maketitle

\section{Introduction}

Let \( S_g \) be an orientable closed connected surface of genus \( g\geq 0 \) and for \( n\geq 1 \) consider \( \{p_1, \ldots, p_n\} \)  a collection of distinguished points in \( S_g\).   The {\it mapping class group} $\Mod_g^n$ is the group of isotopy classes of all orientation-preserving  homeomorphisms  $f:S_g^n\rightarrow S_g^n$, where $S_g^{n}:=S_g-\{p_1, \ldots, p_n\}$. The group $\modgn$ permutes the set $\{p_1, \ldots, p_n\}$ 
and the kernel of such action is the {\it pure mapping class group} $\PMod_g^n$. When $n=0$, we remove it from the notation.

In this paper, we compute the minimal dimension $\gdfin (\Mod_g^n)$, often called the \emph{proper geometric dimension},
of a classifying space $\underline{E}\Mod_g^n$ for proper actions of $\Mod_g^n$. Recall that, for a discrete group $G$, the space $\underline{E}G$ is a contractible $G$-CW-complex on which G acts properly, and such that the fixed point set of a subgroup $H < G$ is contractible if $H$ is finite, and is empty
otherwise.

Since $\Mod_g^n$ is virtually torsion-free, its virtual cohomological dimension $\vcd(\Mod_g^n)$
is a lower bound for $\gdfin(\Mod_g^n)$. Our main result is the following:

\begin{theorem}\label{thm:main}
    For any $g\geq 0$ and $n\geq 1$, the proper geometric dimension of $\Mod_g^n$ is $$\gdfin(\Mod_g^n)=\vcd(\Mod_g^n).$$
    Moreover,  there exists a  cocompact $\underline{E}\Mod_g^n$ of dimension equal to $\vcd(\Mod_g^n)$. 
\end{theorem}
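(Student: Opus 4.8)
The lower bound $\vcd(\Mod_g^n)\le \gdfin(\Mod_g^n)$ is already recorded, so the entire task is to produce a cocompact model $\underline{E}\Mod_g^n$ of dimension exactly $\vcd(\Mod_g^n)$. The plan is to leverage the Aramayona--Martínez Pérez result on the pure subgroup through the finite extension
\[
1\longrightarrow \PMod_g^n \longrightarrow \Mod_g^n \longrightarrow \Sigma_n \longrightarrow 1,
\]
where $\Sigma_n$ is the symmetric group acting on the punctures $\{p_1,\dots,p_n\}$ (every permutation being realizable by a homeomorphism). Since $\PMod_g^n$ has finite index $n!$ in $\Mod_g^n$, virtual cohomological dimension is unchanged, $\vcd(\Mod_g^n)=\vcd(\PMod_g^n)$, so the target dimension is precisely the one for which Aramayona--Martínez Pérez already provide a cocompact $\underline{E}\PMod_g^n$. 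As a check that a proper model exists at all, recall that $\Mod_g^n$ acts properly on Teichmüller space $\calT_g^n$ with finite stabilizers, and by Nielsen realization (Kerckhoff) the fixed-point set of any finite subgroup is the Teichmüller space of the associated quotient orbifold, hence contractible; thus $\calT_g^n$ is a model for $\underline{E}\Mod_g^n$, but its dimension $6g-6+2n$ exceeds $\vcd(\Mod_g^n)$ and must be cut down.

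First I would pass to the algebraic invariant. By the theorem of Lück--Meintrup, $\gdfin(G)=\cdfin(G)$ whenever $\cdfin(G)\ge 3$, where $\cdfin$ denotes the Bredon (proper) cohomological dimension; the finitely many small cases with $\vcd(\Mod_g^n)\le 2$ can be treated by hand. It therefore suffices to show $\cdfin(\Mod_g^n)\le \vcd(\Mod_g^n)$, the reverse inequality being automatic. The input from the pure case gives $\cdfin(\PMod_g^n)=\vcd(\PMod_g^n)$, so the real work is to control how the Bredon dimension can grow when passing to the finite overgroup $\Mod_g^n$.

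To effect this passage I would assemble the model for $\underline{E}\Mod_g^n$ from the pure model together with the contributions of the finite subgroups, via a Lück--Weiermann type pushout indexed by the conjugacy classes of (maximal) finite subgroups $F\le \Mod_g^n$. The quantity attached to each such $F$ is the cohomological dimension of its Weyl group $N_{\Mod_g^n}(F)/F$. This is where the extension to the \emph{full} mapping class group differs essentially from the pure case: one must handle finite subgroups $F$ that project nontrivially to $\Sigma_n$, i.e.\ that genuinely permute the punctures. For each such $F$, Nielsen realization presents $F$ as a group of homeomorphisms of $S_g^n$, and $N_{\Mod_g^n}(F)/F$ is commensurable with the mapping class group of the quotient orbifold $S_g^n/F$. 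The main obstacle, and the technical core of the argument, is to bound $\vcd\bigl(N_{\Mod_g^n}(F)/F\bigr)$ by $\vcd(\Mod_g^n)$, uniformly over all conjugacy classes; this is a Harer-type dimension count on the quotient orbifolds that must carefully account for the punctures and cone points produced by the $F$-action.

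Finally, cocompactness would follow from two finiteness inputs fed into the pushout: $\Mod_g^n$ has only finitely many conjugacy classes of finite subgroups, and each Weyl group $N_{\Mod_g^n}(F)/F$ admits a cocompact model for its own classifying space for proper actions of the appropriate dimension (again through the orbifold mapping class group, applying the pure-case result to a torsion-free finite-index subgroup). Gluing the cocompact pure model to these cocompact normalizer models along the pushout yields a cocompact $\underline{E}\Mod_g^n$ whose dimension is the maximum over the pieces, which the normalizer bound forces to equal exactly $\vcd(\Mod_g^n)$.
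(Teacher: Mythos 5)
There is a genuine gap at the step you identify as the technical core. Bounding $\vcd\bigl(N_{\Mod_g^n}(F)/F\bigr)$ by $\vcd(\Mod_g^n)$ uniformly over conjugacy classes of finite subgroups is \emph{not} a sufficient criterion for $\cdfin(\Mod_g^n)\leq \vcd(\Mod_g^n)$. The criterion that actually works (\cref{thm:aramayona:martinezperez}, due to Aramayona--Martínez Pérez and resting on Connolly--Kozniewski) is strictly stronger: it requires $\vcd(WF)+\lambda(F)\leq \vcd(\Mod_g^n)$ for every finite subgroup $F$, where $\lambda(F)$ is the maximal length of a chain $1=F_0<F_1<\cdots<F_i=F$. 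The length term is what absorbs the dimension growth in the induction over the poset of finite subgroups; it cannot be dropped. Relatedly, the Lück--Weiermann-type pushout you invoke only produces a model whose dimension is the maximum over the pieces under strong structural hypotheses (for instance, every nontrivial finite subgroup lying in a unique maximal finite subgroup, with suitable normalizer conditions), and these fail for $\Mod_g^n$: by Stukow's classification, already a $\Z/2$ in $\Mod_0^n$ can lie in several non-conjugate maximal dihedral subgroups. Without such hypotheses your gluing does not give the asserted dimension bound, nor the cocompactness conclusion you derive from it; the paper instead obtains cocompactness from the Ji--Wolpert truncated Teichmüller space together with Lück's theorem (\cref{thm:Luck}).

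Second, even the corrected criterion genuinely fails in a case your case split does not isolate: in $\Mod_0^6$ the subgroup $F=\mathfrak{S}_4$ satisfies $\vcd(WF)+\lambda(F)=0+4=4>3=\vcd(\Mod_0^6)$, and $\vcd(\Mod_0^6)=3$ is not among your ``small cases with $\vcd\leq 2$ treated by hand.'' So no uniform inequality of the required kind exists, and the case $(g,n)=(0,6)$ must be circumvented rather than proved: the paper does this via the Birman--Hilden central extension $1\to \Z/2\to \Mod_2\to \Mod_0^6\to 1$ and the fixed-point set of the hyperelliptic involution in a $3$-dimensional model for $\underline{E}\Mod_2$. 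Beyond this, the actual verification of the inequality in the paper is a genus-by-genus analysis (Stukow's classification of finite subgroups for $g=0$, wallpaper groups for $g=1$, Broughton's classification for $g=2$, and a Birman exact sequence reduction to the closed case of \cite[Proposition 4.4]{AMP14} for $g\geq 3$), with additional ad hoc arguments for $\Mod_0^4$, $\Mod_0^5$ and $\Mod_1^2$ at the Eilenberg--Ganea threshold. Your sketch shares the overall skeleton (Weyl groups of finite subgroups via quotient orbifolds, plus Lück to pass from $\cdfin$ to $\gdfin$), but as stated the missing $\lambda(F)$ term and the missed exceptional case make the argument fail.
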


The computation of the proper geometric dimension follows from  \cref{thm:main:genus:zero} for $g=0$, \cref{thm:main:genus:one} for $g=1$, \cref{thm:main:genus:2} for $g=2$, and \cref{thm:main:genus:atleast3} for $g\geq 3$. These theorems are the main results of Sections \ref{sec:zero}, \ref{sec:one}, \ref{sec:two}, and \ref{sec:atleast3}  respectively, which provides a fair overview of the structure of the paper. For $2g+n>2$, the Teichmüller space $\Tgn$ is known to be a model for $\underline E \Mod_g^n$; see for instance \cite[Proposition 2.3]{WolpertJi} and \cite[Section 4.10]{Lu05}. In fact, 
Ji and Wolpert proved in \cite[Theorem 1.2]{WolpertJi} that the {\it truncated} Teichmüller space $\Tgn(\epsilon)$ is a cocompact classifying space for proper actions for $\epsilon$ sufficiently small. Hence, the {\it moreover} part of the main result follows from our computation of $\gdfin(\Mod_g^n)$ and a result of Lück \cite[Theorem 13.19]{Lu89} (see \cref{thm:Luck} below).

\subsection{Known results and filling a  gap in the literature}  In \cite[Section 2]{Ha86} Harer constructed a {\it spine} of Teichmüller space, that is a $\PMod_g^n$-equivariant deformation retraction of $\Tgn$ provided $n>0$. This spine gives a model for $\underbar{E}\PMod_g^n$, which is of minimal dimension for the mapping class group $\Mod_g^1$ of an orientable surface with exactly one puncture; see \cite[Theorem 2.1]{Ha86} and \cite[Introduction]{LaEspinayLos4}.

 On the other hand, Aramayona and Martínez Pérez proved that the mapping class group of any closed orientable surface $\Mod_g$ \cite[Theorem 1.1]{AMP14} admits a cocompact classifying space for proper actions of dimension equal to its virtual cohomological dimension. They use their result and the Birman exact sequence to show that is also the case for the {\it pure}  mapping class group $\PMod^n_g$ \cite[Corollary 1.3]{AMP14}; see also { arXiv:1302.4238v.2} and 
 \cite[Corollary 3.5]{LaEspinayLos4}. 
 
 However, \cite[Theorem 1.1]{AMP14} does not imply the result for the {\it full} mapping class group $\Mod_g^n$ when $n\geq 2$, and \cite[Theorem 1.1]{AMP14} has been misquoted in the literature.  For instance, the existence of a model of minimal dimension for the {\it full} mapping class group is used in the proofs of \cite[Theorem 1 $\&$ Main Theorem]{JPT16}, \cite[Proposition 5.3 $\&$ Theorem 1.4]{Nucinkis:Petrosyan}, \cite[Theorem 1.1]{AJPTN18}, and \cite[Theorem 1.5]{JRLASS24}.  Our \cref{thm:main} fills this {\it gap}, and together with \cite[Theorem 1.1]{AMP14} it gives the necessary ingredients to prove the following result.

\begin{corollary}[Proposition 5.3 of \cite{Nucinkis:Petrosyan}] Let $S$ be a closed (possibly disconnected) surface with possible
a finite number of punctures. Then there is a cocompact model for $\underline{E}\Mod(S)$ of dimension $\gdfin (\Mod(S)) = \vcd (\Mod(S))$.
\end{corollary}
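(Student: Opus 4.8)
The plan is to reduce the disconnected case to the connected one handled by \cref{thm:main} and \cite[Theorem 1.1]{AMP14}, and then to assemble the pieces via two elementary operations on classifying spaces for proper actions: finite products and wreath products by finite symmetric groups. First I would record the group-theoretic structure of $\Mod(S)$. Grouping the connected components of $S$ into their homeomorphism types $\Sigma_1,\dots,\Sigma_m$ with respective multiplicities $a_1,\dots,a_m$, every mapping class preserves the homeomorphism type of a component and permutes the $a_j$ copies of $\Sigma_j$ among themselves (each such permutation being realized by an orientation-preserving homeomorphism, as the copies are orientation-preservingly homeomorphic). This gives a canonical isomorphism
\[
\Mod(S)\;\cong\;\prod_{j=1}^{m}\bigl(\Mod(\Sigma_j)\wr \mathrm{Sym}(a_j)\bigr),
\]
where $\mathrm{Sym}(a_j)$ is the symmetric group on $a_j$ letters and the wreath product is $\Mod(\Sigma_j)^{a_j}\rtimes \mathrm{Sym}(a_j)$. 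Each $\Sigma_j$ is a connected orientable surface that is either closed or has finitely many punctures, so $\Mod(\Sigma_j)$ admits a cocompact $\underline E\Mod(\Sigma_j)$ of dimension $\gdfin(\Mod(\Sigma_j))=\vcd(\Mod(\Sigma_j))$ by \cref{thm:main} in the punctured case and by \cite[Theorem 1.1]{AMP14} in the closed case.

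Next I would treat the two assembly operations. For a finite product, if $Y_i$ is a cocompact $\underline EG_i$ of dimension $d_i$, then $\prod_i Y_i$ is a cocompact $\underline E(\prod_i G_i)$ of dimension $\sum_i d_i$: a point is fixed by a finite subgroup $F\le\prod_iG_i$ exactly when its $i$-th coordinate is fixed by $\pi_i(F)$, so $(\prod_iY_i)^F=\prod_i Y_i^{\pi_i(F)}$ is a product of contractible sets, and it is empty when $F$ is infinite. For a wreath product $W=H\wr\mathrm{Sym}(a)$ with $X=\underline EH$ cocompact, I would let $H^a$ act coordinatewise and $\mathrm{Sym}(a)$ permute coordinates, obtaining a proper cocompact action of $W$ on $X^a$; cocompactness is preserved because $\mathrm{Sym}(a)$ is finite. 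The crux is then to verify that $X^a$ is a model for $\underline EW$, that is, that $(X^a)^F$ is contractible for every finite $F\le W$.

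This fixed-point computation is the main obstacle, since it is where the permutation action interacts with the $H$-action. Given finite $F\le W$, let $\overline F\le\mathrm{Sym}(a)$ be its image and let $O_1,\dots,O_r$ be the orbits of $\overline F$ on $\{1,\dots,a\}$. Choosing a representative $i_s\in O_s$, the other coordinates over $O_s$ are determined by the $i_s$-coordinate, and restriction to $i_s$ identifies the $F$-fixed points lying over $O_s$ with $X^{F_{i_s}}$, where $F_{i_s}\le H$ is the finite image of the $F$-stabilizer of $i_s$ under $(h,\sigma)\mapsto h_{i_s}$ (a homomorphism on that stabilizer). Hence $(X^a)^F\cong\prod_{s=1}^r X^{F_{i_s}}$ is a finite product of contractible sets, so contractible and nonempty. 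Consequently $\gdfin(W)\le\dim(X^a)=a\,\vcd(H)$. Since $H^a$ has finite index in $W$ and mapping class groups of surfaces of finite type are virtual duality groups (Harer), $\vcd$ is a commensurability invariant and additive over products, so $\vcd(W)=a\,\vcd(H)$; together with the general bound $\vcd(W)\le\gdfin(W)$ this forces $\gdfin(W)=\vcd(W)=a\,\vcd(H)$, realized by the cocompact model $X^a$. Finally, taking the product over $j$ of the cocompact models $X^{a_j}$ yields a cocompact $\underline E\Mod(S)$ of dimension $\sum_j a_j\,\vcd(\Mod(\Sigma_j))=\vcd(\Mod(S))$, which by $\gdfin\ge\vcd$ is of minimal dimension. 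This gives $\gdfin(\Mod(S))=\vcd(\Mod(S))$ together with a cocompact model, as claimed.
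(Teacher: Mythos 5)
Your proposal is correct in substance, but note its relation to the paper: the paper does not actually write out a proof of this corollary. It observes that \cref{thm:main}, together with \cite[Theorem 1.1]{AMP14}, supplies the ingredient that was previously missing (cocompact models of dimension $\vcd$ for the connected pieces, including the \emph{full} mapping class groups of punctured surfaces), and it defers the assembly to the proof of \cite[Proposition 5.3]{Nucinkis:Petrosyan}. What you have written is essentially a reconstruction of that assembly argument: the decomposition $\Mod(S)\cong\prod_{j}\bigl(\Mod(\Sigma_j)\wr \mathrm{Sym}(a_j)\bigr)$, products of models for direct products, and the power $X^{a}$ with the permutation action for wreath products. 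Your two key computations are the right ones: the fixed-point identification $(X^a)^F\cong\prod_{s}X^{F_{i_s}}$ indexed by the orbits of the image of $F$ in $\mathrm{Sym}(a)$ is correct, and the appeal to Harer's virtual duality to obtain $\vcd(H^a)=a\,\vcd(H)$ (and additivity across the outer product) is exactly what the lower bound requires. It is also to your credit that you did not shortcut the wreath step by a finite-index argument: $\gdfin$ is \emph{not} monotone under passing to finite-index overgroups (by examples of Leary and Nucinkis), so the explicit fixed-point analysis is genuinely needed and cannot be replaced by commensurability.

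The one genuine gap is the $W$-CW structure. A model for $\underline{E}W$ is by definition a $W$-CW complex, and $X^{a}$ with the product cell structure is \emph{not} one: an element of $W$ that permutes coordinates can stabilize a ``diagonal'' product cell without fixing it pointwise (already for $a=2$ the flip on $X\times X$ preserves each cell of the form $e\times e$ but acts nontrivially on it), which violates the requirement that a cell stabilizer act trivially on that cell. The same issue undercuts, as literally written, the inequality $\gdfin(W)\le\dim(X^a)$. This is repairable by standard but nontrivial arguments, and it must be addressed: for instance, replace $X$ by a simplicial model with regular $H$-action and triangulate $X^{a}$ equivariantly (e.g.\ via the order complex of the product of face posets, which realizes $W$-homeomorphically to $X^{a}$, has dimension $a\dim X$, and becomes a regular $W$-simplicial complex after barycentric subdivision, preserving cocompactness); alternatively, run your orbit analysis algebraically to bound $\cdfin(W)$ and invoke \cref{thm:Luck}, with separate care in the low-dimensional cases where $a\,\vcd(\Mod(\Sigma_j))<3$ (for instance two torus components, where the target dimension is $2$). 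With that point closed, your argument is complete and agrees with the proof the paper points to.
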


\subsection{Proper geometric dimension of spherical braid groups} 
The {\it full $n$-th strand spherical braid group $B_n(S_0)$} is the fundamental group of the $n$th unordered configuration space of  the sphere $S_0$.  It is the trivial group for $n=1$ and a cyclic group of order $2$ for $n=2$. 
For all $n\geq 1$ these groups are virtually torsion free and their virtual cohomological dimension is $\vcd(B_n(S_0))=\max\{0,n-3\}$ \cite[Theorem 5]{VcdBraid}. For $n\geq 3$ the mapping class group $\Mod_0^n$ is related to the  $B_n(S_0)$ by the following central extension (see for example \cite[Section 2.4]{BraidSurvey})
\[ 1\rightarrow \mathbb{Z}/2\rightarrow B_n(S_0)\rightarrow \Mod(S_0^n)\rightarrow 1.\]
Note that any cocompact model for $\underline E \Mod(S_0^n)$ is, via the above short exact sequence, a cocompact model for $\underline E B_n(S_0)$.  It follows from \cite[Lemma 5.8]{Lu05} that $\gdfin(B_n(S_0))=\gdfin(\Mod(S_0^n)$, and \cref{thm:main} implies:

\begin{corollary} For any $n\geq 1$, 
$\gdfin(B_n(S_0))=\vcd(B_n(S_0))=\max\{0,n-3\}$. Moreover, there exists a cocompact model for $\underline E B_n(S_0)$ of dimension  $\vcd(B_n(S_0))$.
\end{corollary}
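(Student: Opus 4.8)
The plan is to reduce everything to \cref{thm:main} through the central extension
\[ 1\rightarrow \mathbb Z/2\rightarrow B_n(S_0)\rightarrow \Mod(S_0^n)\rightarrow 1 \]
recorded above, treating the small values of $n$ separately. For $n=1$ and $n=2$ the group $B_n(S_0)$ is finite (trivial, respectively cyclic of order two), so a point is a cocompact model for $\underline E B_n(S_0)$ and $\gdfin(B_n(S_0))=0=\max\{0,n-3\}$, as required.

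For $n\geq 3$ I would first invoke \cref{thm:main} (with $g=0$) to obtain a cocompact model for $\underline E\Mod(S_0^n)$ of dimension $\vcd(\Mod(S_0^n))$. Since the central kernel $\mathbb Z/2$ is finite, inflating this model along the surjection $B_n(S_0)\to\Mod(S_0^n)$ yields a $B_n(S_0)$-CW-complex on which $B_n(S_0)$ acts with the correct fixed-point behaviour: a subgroup of $B_n(S_0)$ is finite precisely when its image in $\Mod(S_0^n)$ is finite, and the two fixed-point sets coincide. This is exactly the content of \cite[Lemma 5.8]{Lu05}, already recorded in the discussion preceding the statement, and it gives the equality $\gdfin(B_n(S_0))=\gdfin(\Mod(S_0^n))$ together with cocompactness of the resulting model.

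It then remains to identify the two numerical invariants. By \cref{thm:main} we have $\gdfin(\Mod(S_0^n))=\vcd(\Mod(S_0^n))$, and since $B_n(S_0)$ is a finite central extension of $\Mod(S_0^n)$ the virtual cohomological dimension is unchanged: a torsion-free finite-index subgroup $\Gamma'\leq B_n(S_0)$ meets the kernel $\mathbb Z/2$ trivially, hence embeds via the quotient map as a torsion-free finite-index subgroup of $\Mod(S_0^n)$, whence $\vcd(B_n(S_0))=\cd(\Gamma')=\vcd(\Mod(S_0^n))$. Combining these equalities with the computation $\vcd(B_n(S_0))=\max\{0,n-3\}$ of \cite[Theorem 5]{VcdBraid} gives
\[ \gdfin(B_n(S_0))=\gdfin(\Mod(S_0^n))=\vcd(\Mod(S_0^n))=\vcd(B_n(S_0))=\max\{0,n-3\}, \]
and the $B_n(S_0)$-CW-complex produced above is the desired cocompact $\underline E B_n(S_0)$ of this dimension.

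I do not expect a genuine obstacle, since all the heavy lifting is performed by \cref{thm:main}; the only point requiring care is the inflation step, namely checking that pulling back a proper model along a quotient with finite kernel preserves both properness and the contractibility of the fixed-point sets of the finite subgroups. This is precisely where the finiteness (and centrality) of $\mathbb Z/2$ is used, and why \cite[Lemma 5.8]{Lu05} is invoked rather than a naive pullback.
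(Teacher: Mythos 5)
Your proposal is correct and follows essentially the same route as the paper: pull back the cocompact model for $\underline{E}\Mod(S_0^n)$ along the central extension with finite kernel, invoke \cite[Lemma 5.8]{Lu05} to identify $\gdfin(B_n(S_0))$ with $\gdfin(\Mod(S_0^n))$, and conclude via \cref{thm:main} and the known value $\vcd(B_n(S_0))=\max\{0,n-3\}$. Your explicit treatment of $n=1,2$ and the verification that the $\vcd$'s agree via a torsion-free finite-index subgroup are details the paper leaves implicit, but they do not constitute a different argument.
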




\subsection{Overview of the proof and  the paper} In order to prove our main result \cref{thm:main}, we follow the general strategy of \cite{AMP14}, and obtain the proper geometric dimension $\gdfin(\Mod_g^n)$ by computing its algebraic counterpart $\cdfin(\Mod_g^n)$. This computation amounts to verify that for any finite subgroup $F$ of $\Mod_g^n$, the inequatily $\vcd(NF) +\lambda (F)\leq \vcd(\Mod_g^n)$ holds, and use \cite[Theorem 3.3]{AMP14} (see \cref{thm:aramayona:martinezperez} below). Here $NF$ denotes the {\it normalizer} of $F$ in $\Mod_g^n$  and $\lambda (F)$ is the {\it length} of $F$. In \cref{sec:dimensions} we introduce the necessary  definitions and details.

An application of Nielsen realization and \cref{lemma:Maher} due to Maher,  imply that $\vcd(NF)=\vcd(\Mod_{g_F}^{n_F})$ for any finite subgroup $F$ of $\Mod_g^n$. The parameters $g_F$ and $n_F$ are invariants of the orbifold quotient $S_g^n/F$ as described in \cref{mcg}. Most of the present paper deals with either computing or upper-bounding the parameters $n_F$ and $g_F$.  Once computed $g_F$ and $n_F$, we can use Harer's computation  of the $\vcd$ for mapping class groups (\cref{vcd:mcg}) to obtain $\vcd(\Mod_{g_F}^{n_F})$. On the other hand, $\lambda(F)$ can be bounded by the number of factors in the prime decomposition of $|F|$.

 For $g\geq 3$, we obtain \cref{thm:main:genus:atleast3} as a straightforward application of \cite[Proposition 4.4]{AMP14}, which we explain in \cref{sec:atleast3}.  However, for genus $g=0$, $g=1$ and $g=2$, an independent and careful analysis of the finite subgroups of $\Mod_g^n$ is required. This is done in Sections \ref{sec:zero}, \ref{sec:one} and \ref{sec:two}, respectively.\medskip

\noindent{\bf Acknowledgments.} The authors thank Omar Antolín Camarena and Jesús Nuñez Zimbrón for useful discussions. The first author was funded by CONAHCyT through the program \textit{Estancias Posdoctorales por México.} The third author's work was supported by UNAM \textit{Posdoctoral Program (POSDOC)}. All authors are grateful for the financial support of DGAPA-UNAM grant PAPIIT IA106923.

\section{Preliminaries}

\subsection{Proper geometric and cohomological dimensions}\label{sec:dimensions}

There are several notions of dimension defined for a given group $G$. We recall in this section the notions that will be used in this paper.

A model for the {\it classifying space of $G$ for proper actions $\underbar{E}G$} is a $G$-CW-complex $X$  such that the fixed point set $X^H$ of a subgroup $H < G$ is contractible if $H$ is finite, and is empty otherwise. Such a model always exists and is unique up to proper $G$-homotopy. The {\it proper geometric dimension of $G$} is defined to be
$$\gdfin(G)=\{ n\in\mathbb{Z}_{\geq 0}:\  \text{there exists an $n$-dimensional model for }\underbar{E}G\}.$$

On the other hand,  let {\sc Fin} be the collection of finite subgroups of $G$, and consider the restricted orbit category $\mathcal{O}_{\text{\sc Fin}}G$, which has as objects
the homogeneous $G$-spaces $G/H$, $H\in${\sc Fin}, and morphisms are $G$-maps. A $\mathcal{O}_{\text{\sc Fin}}G$-module is a
contravariant functor from $\mathcal{O}_{\text{\sc Fin}}G$ to the category of abelian groups, and a morphism between
two $\mathcal{O}_{\text{\sc{Fin}}}G$-modules is a natural transformation of the underlying functors. 
The category $\mathcal{O}_{\text{\sc{Fin}}}G$-mod of $\mathcal{O}_{\text{\sc{Fin}}}G$-modules is an abelian category with enough
projectives; see for example \cite[p.7]{MV03}. 
The {\it proper cohomological dimension of $G$} is defined to be
\[\cdfin(G)=\min\left\{ n\in\mathbb{Z}_{\geq 0}:\  \exists\text{ a projective resolution of the  $\mathcal{O}_{\text{\sc{Fin}}}G$-module $\mathbb{Z}_{\text{\sc{Fin}}}$ of length $n$}\right\},\]
where $\mathbb{Z}_{\text{\sc{Fin}}}$  is the constant $\mathcal{O}_{\text{\sc{Fin}}}G$-module  given by $\mathbb{Z}_{\text{\sc{Fin}}} (G/H) = \mathbb{Z}$ for all $H\in{\text{\sc{Fin}}}$, and every morphism of $\mathcal{O}_{\text{\sc{Fin}}}G$ goes to the identity function.

The {\it cohomological dimension $\cd(H)$} of a group $H$ is the length of shortest projective resolution, in the category of $H$-modules, for the trivial $H$-module $\mathbb{Z}$. If  $G$ is virtually torsion free, then $G$ contains a torsion free subgroup $H$ of finite index, and the {\it virtual cohomological dimension of $G$} is defined to be $\vcd(G) = \cd(H)$. A well-known theorem of Serre stablishes that $\vcd(G)$ is well defined and it does not depend on the choice of the finite index torsion free subgroup $H$ of $G$. 
For every virtually torsion free group $G$, we have the following inequalities (see \cite[Theorem 2]{BLN01})
\begin{equation}
    \vcd(G)\leq \cdfin(G) \leq \gdfin(G) \leq \max\{3, \cdfin(G)\}.
\end{equation}


Following  the general strategy of \cite{AMP14}, we will determine the proper geometric dimension $\gdfin(\Mod_g^n)$ by computing the algebraic counterpart $\cdfin(\Mod_g^n)$. We recall here the two main properties of $\cdfin(G)$ that we will need to obtain \cref{thm:main}.

Firstable, the following theorem is a consequence of 
\cite[Theorem 13.19]{Lu89}, as explained in \cite[Section 3]{AMP14}; see also \cite[Theorem 1]{BLN01}.

\begin{theorem}\label{thm:Luck} Let $G$ be  a group with $\cdfin(G)=d\geq 3$. Then there is a $d$-dimensional $\underbar{E}G$. Morevoer, if $G$ has a cocompact model for $\underbar{E}G$, then it also admits a cocompact $\underbar{E}G$ of dimension $d$.
\end{theorem}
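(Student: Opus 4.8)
The plan is to split the two assertions, since the dimension statement is almost formal while the cocompactness statement is the real content. For the first claim, note that the desired conclusion is already contained in the inequality chain recorded above (the comparison $\cdfin(G)\leq\gdfin(G)\leq\max\{3,\cdfin(G)\}$ being valid for an arbitrary group). Since by hypothesis $\cdfin(G)=d\geq 3$, we have $\max\{3,\cdfin(G)\}=d$, so the chain collapses to $d\leq\gdfin(G)\leq d$, forcing $\gdfin(G)=d$. By the very definition of $\gdfin$ as a minimal realizable dimension, this already produces a $d$-dimensional model for $\underbar{E}G$. Thus the only genuine work lies in the \emph{moreover} statement, where one must additionally control cocompactness.

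For the cocompact refinement I would argue at the level of Bredon modules over $\calO_{\text{\sc{Fin}}}G$. A cocompact model $X$ for $\underbar{E}G$ has finitely many $G$-orbits of cells, hence is finite-dimensional, and its cellular chain complex is a \emph{finite} resolution of the constant module $\mathbb{Z}_{\text{\sc{Fin}}}$ by finitely generated free $\calO_{\text{\sc{Fin}}}G$-modules, say $0\to C_m\to\cdots\to C_0\to\mathbb{Z}_{\text{\sc{Fin}}}\to 0$. Because $\cdfin(G)=d$, the $d$-th syzygy $K=\operatorname{im}(C_d\to C_{d-1})$ is projective, and it is finitely generated, being the image of the finitely generated module $C_d$. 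Hence $\mathbb{Z}_{\text{\sc{Fin}}}$ admits a length-$d$ resolution by finitely generated modules. The remaining task is to realize this algebraic resolution geometrically: keep the $(d-1)$-skeleton of $X$ and rebuild everything above it using finitely many equivariant cells that kill the leftover homotopy of the fixed-point systems $X^H$. That this is possible---including the absorption of the projective (not necessarily free) top syzygy into the construction---is precisely the finiteness-sensitive content of \cite[Theorem 13.19]{Lu89} specialized to the family of finite subgroups, and it outputs a cocompact $d$-dimensional $\underbar{E}G$.

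The main obstacle, and the reason for the hypothesis $d\geq 3$, is the equivariant Eilenberg--Ganea phenomenon: trimming a model down to its cohomological dimension is unobstructed only when the cells being attached have dimension at least $3$, where the equivariant Hurewicz and Whitehead theorems let one realize algebraic moves on the Bredon chain complex by actual cells on each fixed-point set $X^H$; in dimension $2$ this can fail, which is exactly why the general comparison yields only $\gdfin(G)\leq\max\{3,\cdfin(G)\}$ rather than equality. The second delicate point---keeping the construction cocompact while the top syzygy is merely projective---cannot be settled by an Eilenberg swindle, which would destroy finite generation. Here one uses instead that the mere existence of \emph{some} cocompact model forces the relevant finiteness obstruction to vanish, so that Lück's construction returns a genuinely cocompact model of dimension $d$. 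This is the technical heart that \cite[Theorem 13.19]{Lu89} supplies, and against which the two short paragraphs above are merely the specialization.
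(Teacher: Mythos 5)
Your proposal is correct and takes essentially the same route as the paper: the paper gives no independent proof of this theorem, deducing it from \cite[Theorem 13.19]{Lu89} as explained in \cite[Section 3]{AMP14} (see also \cite[Theorem 1]{BLN01}), and you likewise obtain the $d$-dimensional model from the inequality $\cdfin(G)\leq \gdfin(G)\leq \max\{3,\cdfin(G)\}$ (valid for arbitrary groups, so no torsion-free hypothesis is needed) and defer the cocompact refinement to Lück's Theorem 13.19. Your Bredon-syzygy sketch of the \emph{moreover} part is a fair account of what that citation supplies, but it plays the same role as the paper's citation rather than replacing it.
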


Now, let $F$ be a finite subgroup of $G$. We denote by $Z_G(F)$, $N_G(F)$, and $W_G(F)=N_G(F)/F$  the centralizer, the normalizer, and the {\it Weyl group of} $F$ respectively. If there is no risk of confusion we will omit the parenthesis and subindices, i.e. we will use the notation $ZF$, $NF$, and $WF$. 
The \emph{length} $\lambda(F)$ of a finite group $F$ is the largest $i\geq 0$ for which  there is a sequence \[1=F_0<F_1 < \cdots < F_i=F.\]

The second useful result to obtain our \cref{thm:main} is the following theorem by Aramayona and Martínez Pérez that gives us a criterion to compute the proper cohomological dimension. It is a consequence from a result of Connolly and Kozniewski \cite[Theorem A]{CK86}. 

\begin{theorem}\label{thm:aramayona:martinezperez} Let $G$ be a virtually torsion free group such that for any $F\leq G$ finite subgroup, $\vcd(WF)+\lambda(F)\leq \vcd(G)$. Then $\cdfin(G)= \vcd(G)$. 
\end{theorem}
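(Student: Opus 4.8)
The plan is to establish the two inequalities $\vcd(G)\le\cdfin(G)$ and $\cdfin(G)\le\vcd(G)$ separately; together they give the claimed equality. The first one needs no hypothesis at all: it is the left-most inequality of the chain recalled above, valid for every virtually torsion-free group. Hence all the work goes into the bound $\cdfin(G)\le \vcd(G)$, and it is precisely here that the assumption $\vcd(WF)+\lambda(F)\le\vcd(G)$ must be used.

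To bound $\cdfin(G)$ from above I would feed $G$ into the machinery of Connolly and Kozniewski \cite[Theorem A]{CK86}, which assembles a model for $\underline{E}G$ --- or, on the algebraic side, a projective resolution of the constant module $\mathbb{Z}_{\text{\sc{Fin}}}$ over $\mathcal{O}_{\text{\sc{Fin}}}G$ --- by an induction over the conjugacy classes of finite subgroups, organized by length. One starts from the free stratum and adjoins, through a sequence of $G$-pushouts, the singular strata; the stratum indexed by (the conjugacy class of) a finite subgroup $F$ is obtained by inducing up from the normalizer $NF$ a piece controlled by the Weyl group $WF=NF/F$.

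The decisive point is the dimension accounting for each stratum. Up to $G$-homotopy the stratum attached at $F$ has the form $G\times_{NF}Y_F$, where $Y_F$ is built from a free classifying space for $WF$ together with the order complex of the chains $1=F_0<F_1<\cdots<F_i=F$ of finite subgroups terminating at $F$. Since a finite kernel does not change the virtual cohomological dimension, the first factor contributes $\vcd(NF)=\vcd(WF)$, while the order complex has geometric realization of dimension $\lambda(F)$; thus the stratum lives in dimension $\vcd(WF)+\lambda(F)$. Running these pieces through the long exact (Mayer--Vietoris) sequences produced by the successive pushouts, the Bredon cohomological dimension of the assembled object is bounded by the largest stratum, i.e.\ by $\sup_F\{\vcd(WF)+\lambda(F)\}$. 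The hypothesis bounds each of these numbers by $\vcd(G)$, so $\cdfin(G)\le\vcd(G)$ and the theorem follows. (The reverse inequality is in fact already visible in this count, taken at $F=1$, where $W(1)=G$ and $\lambda(1)=0$, so the supremum is attained and equals $\vcd(G)$ exactly.)

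I expect the main obstacle to be making this bookkeeping rigorous rather than merely plausible: one must check that the transverse Weyl-group data is genuinely realizable in dimension $\vcd(WF)$ and not only in the a priori larger $\cdfin(WF)$, that the shift contributed by the order complex of subchains is exactly $\lambda(F)$ and no more, and that inducing up along $NF\le G$ together with the gluing pushouts neither raises nor lowers the dimension. Passing to the algebraic category of $\mathcal{O}_{\text{\sc{Fin}}}G$-modules is what keeps all of this under control, since it circumvents the fact that $EG$ is infinite-dimensional whenever $G$ has torsion, while the connecting homomorphisms in the relevant long exact sequences deliver the stated bound directly.
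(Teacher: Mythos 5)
Your proposal cannot be checked line by line against the paper, because the paper does not prove this statement at all: it is quoted as Theorem 3.3 of \cite{AMP14}, which in turn is presented there as a consequence of Connolly--Kozniewski's Theorem A \cite{CK86}. So you are reconstructing that outside argument. Your skeleton is the right one, and it is the one behind \cite{CK86}: the lower bound $\vcd(G)\leq\cdfin(G)$ holds for every virtually torsion-free group (the chain recalled in \cref{sec:dimensions}), and the upper bound is obtained by organizing the conjugacy classes of finite subgroups by length, controlling the contribution of each class through its Weyl group, with a shift of $\lambda(F)$ coming from the depth of the stratification.

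However, the decisive step in your dimension accounting is wrong as stated. You claim the stratum attached at $F$ is $G\times_{NF}Y_F$ with $Y_F$ built from a \emph{free} classifying space for $WF$, and that this factor ``contributes $\vcd(NF)=\vcd(WF)$''. A free classifying space for $WF$ has dimension $\cd(WF)$, which is infinite whenever $WF$ has torsion --- and the Weyl groups occurring here typically do have torsion. More generally, $H^i(WF;M)$ does \emph{not} vanish for $i>\vcd(WF)$ for arbitrary $\mathbb{Z}[WF]$-modules $M$ (finite groups, which have $\vcd=0$, already show this), so nothing in your Mayer--Vietoris bookkeeping forces the strata, or the connecting maps, to die above dimension $\vcd(WF)+\lambda(F)$. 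Run honestly, your argument only yields a bound of the shape $\cdfin(G)\leq\sup_F\{\cdfin(WF)+\lambda(F)\}$, and you cannot close this by induction, because the hypothesis $\vcd(WF)+\lambda(F)\leq\vcd(G)$ concerns finite subgroups of $G$ and does not transfer to the Weyl groups in the form needed to conclude $\cdfin(WF)=\vcd(WF)$. The genuine content of \cite{CK86}, and the reason the theorem is not a formality, is exactly the ingredient you flag as ``the main obstacle'' but do not supply: in the algebraic (obstruction-theoretic) version of the stratification, the coefficient modules appearing at the stratum of $F$ are relative (co)chain modules that are free, hence cohomologically trivial, over every finite subgroup of $WF$, and for \emph{such} coefficients a Farrell--Tate/Shapiro-type vanishing theorem gives $H^i(WF;-)=0$ for $i>\vcd(WF)$. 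That vanishing statement --- not the existence of a small classifying space for $WF$ --- is what converts the numerical hypothesis into $\cdfin(G)\leq\vcd(G)$. (A minor additional slip: your closing parenthetical asserts that the reverse inequality is ``already visible in this count'' at $F=1$; an upper-bound computation cannot produce the lower bound $\vcd(G)\leq\cdfin(G)$, which is simply the standard inequality you had already, and correctly, invoked.)
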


\begin{remark}  For $F$ is a finite subgroup of $G$ the normalizer $NF$ and the Weyl group $WF$ are commensurable, therefore $\vcd(NF)=\vcd(WF)$. In our analysis below,  we will obtain upper bounds of $\vcd(NF)$ in order to apply \cref{thm:aramayona:martinezperez}. 
    
\end{remark}




The following lemma will be useful in our computations below. For a given $g\in G$, we denote (respectively) by $Z(g)$, $N(g)$ and $W(g)$  the centralizer, the normalizer and the Weyl group of the cyclic subgroup $\langle g\rangle$ in $G$.

\begin{lemma}\label{lem:length:vcdWF}
    Let $G$ be a virtually torsion free group and let $F$ a finite subgroup. Then
    \begin{enumerate}
        \item $\lambda(F)\leq \log_2(|F|)$, and
        \item $\vcd(WF)\leq \min \{\vcd(Z(g))|g\in F\}$.
    \end{enumerate}
\end{lemma}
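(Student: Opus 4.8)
For part (1), the plan is to apply Lagrange's theorem along a strictly increasing chain. If $1 = F_0 < F_1 < \cdots < F_i = F$ realizes a chain as in the definition of $\lambda(F)$, then each proper inclusion $F_{j-1} < F_j$ forces the index $[F_j : F_{j-1}]$ to be at least $2$, so $|F| = |F_i| \geq 2^i|F_0| = 2^i$. Taking base-$2$ logarithms gives $i \leq \log_2|F|$, and since this holds for every such chain, in particular for a maximal one, we conclude $\lambda(F) \leq \log_2|F|$.

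For part (2), the idea is to use the centralizer $ZF = Z_G(F)$ as a bridge between $WF$ and $Z(g)$, chaining together three standard invariance properties of virtual cohomological dimension: it is unchanged under passing to a finite-index subgroup (commensurability), it is monotone under subgroups, and it is unchanged by quotients by finite normal subgroups. First, since $F$ is finite, the extension $1 \to F \to NF \to WF \to 1$ yields $\vcd(WF) = \vcd(NF)$; this is precisely the Remark preceding the lemma. Next, conjugation defines a homomorphism $NF \to \Aut(F)$ whose kernel is exactly $ZF$ (an element of $NF$ centralizing all of $F$ is in $ZF$, and conversely $ZF \subseteq NF$ automatically); since $\Aut(F)$ is finite, $ZF$ has finite index in $NF$, whence $\vcd(NF) = \vcd(ZF)$. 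Finally, for any fixed $g \in F$ we have $ZF \leq Z(g)$, because an element commuting with every element of $F$ in particular commutes with $g$; monotonicity then gives $\vcd(ZF) \leq \vcd(Z(g))$.

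Combining these steps, for every $g \in F$ we obtain
\[
\vcd(WF) = \vcd(NF) = \vcd(ZF) \leq \vcd(Z(g)),
\]
and taking the minimum over $g \in F$ establishes the claimed inequality.

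All the steps are elementary; the only point requiring care is verifying the three invariance properties of $\vcd$ invoked in part (2). For the commensurability and finite-quotient statements one passes to a torsion-free subgroup $G_0 \leq G$ of finite index and observes that $G_0$ meets the finite group $F$ trivially, so its image injects with finite index into $WF$; for subgroup monotonicity one intersects $G_0$ with the relevant subgroup and uses that ordinary cohomological dimension is monotone under subgroups. I expect the main conceptual step — and the only place with any subtlety — to be the recognition that $ZF$, rather than $NF$ or $WF$ directly, is the correct intermediary linking the Weyl group to the centralizers $Z(g)$.
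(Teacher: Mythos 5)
Your proposal is correct and takes essentially the same route as the paper: for part (1) you supply the standard Lagrange-index argument along a maximal chain (the paper simply cites this fact from the literature), and for part (2) you use exactly the paper's intermediary, namely that $ZF$ is commensurable with $WF$ — which you justify by splitting it into the finite-kernel extension $1\to F\to NF\to WF\to 1$ and the finite-index inclusion $ZF\leq NF$ coming from $NF/ZF\hookrightarrow \Aut(F)$ — followed by $ZF\leq Z(g)$ and monotonicity of $\vcd$. The paper compresses this into the two assertions that $ZF$ and $WF$ are commensurable and that $ZF=\bigcap_{g\in F}Z(g)$, so your write-up is just a more detailed version of the same argument.
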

\begin{proof}
    Item (1) is \cite[Lemma 3.1]{HSST}.
    For item (2) note that $ZF$ is commensurable with $WF$, hence they have the same $\vcd$. The conclusion follows from the fact that $ZF=\cap_{g\in F}Z(g)$ and the monotonicity of the $\vcd$.
\end{proof}

\subsection{Mapping class groups}\label{mcg} 

For any $g\geq 0$ and $n\geq 0$, the group $\Mod_{g}^n$ is virtually torsion free and its virtual cohomological dimension was computed by J. L. Harer in \cite{Ha86}: 
\begin{equation}\label{vcd:mcg}
\vcd(\modgn) =
\begin{cases}
0& \text{if } g=0 \text{ and } n< 3\\
n-3 & \text{if } g=0 \text{ and } n\geq 3 \\
1 & \text{if } g=1 \text{ and } n= 0 \\
4g-5 & \text{if } g\geq 2 \text{ and } n=0\\
4g-4+n & \text{if } g\geq 1 \text{ and } n\geq 1.
\end{cases}
\end{equation}

Let $F$ be a finite subgroup of $\modgn$. By Nielsen realization (\cite[Theorem 7.2]{FM12},\cite{KER}), there is a hyperbolic structure for $S_g^n$  on which $F$ acts by orientation-preserving isometries,
when $\chi(S_g^n)<0$. 

By a slight abuse of notation, we denote by $S_g^n/F$ both the orbifold and the underlying topological surface. In what follows we will use the following notation: 
\begin{itemize}
    \item$g_F$ is the genus of $S_g^n/F$,
    \item $o_F$ is the number of elliptic (orbifold) points  of $S_g^n/F$,
    \item $n_F$ the number $o_F$ of elliptic points plus the number of orbits of punctures of this action
    \item$(g_F;p_1^F,\ldots,p_{o_F}^F)$ is often called the {\it signature of $F$}, where $p_1^F,\ldots,p_{o_F}^F$ are the orders of the elliptic points of $S_g^n/F$.
\end{itemize}

Notice that $n_F\leq \frac{n}{|F|}+o_F$.

The following lemma is due to Maher. Together with Harer's computation \cref{vcd:mcg}, it gives us  a way to compute the $\vcd$ of the Weyl group $WF$ of any finite subgroup $F$ of $\Mod_g^n$.

\begin{lemma}\label{lemma:Maher}
    \cite[Proposition 2.3]{Ma11} 
    Let $F\leq \Mod_g^n$ be a finite subgroup with $g_F$ and $n_F$ as before. Then $NF$ has finite index in $\Mod_{g_F}^{n_F}$. In particular, $$\vcd(WF) =\vcd (NF)= \vcd(\Mod_{g_F}^{n_F}).$$
\end{lemma}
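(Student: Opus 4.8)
The plan is to realize the equality of $\vcd$'s through a homomorphism from $NF$ to the mapping class group of the quotient orbifold $\mathcal{O}:=S_g^n/F$ that has kernel exactly $F$ and image of finite index, and then to compare $\Mod(\mathcal{O})$ with $\Mod_{g_F}^{n_F}$. Since a homomorphism with finite kernel and finite-index image between virtually torsion-free groups preserves $\vcd$, this will give $\vcd(\Mod_{g_F}^{n_F})=\vcd(WF)$; together with the Remark above, which records that $\vcd(NF)=\vcd(WF)$ because $NF$ and $WF=NF/F$ are commensurable, this yields all the asserted equalities. (The phrase \emph{finite index} in the statement is to be read in this commensurability sense, as the Remark makes explicit.)

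First I would fix, by Nielsen realization, a hyperbolic structure on $S_g^n$ with cusps at the punctures for which $F$ acts by orientation-preserving isometries. The quotient $\mathcal{O}$ is then an orbifold whose underlying surface has genus $g_F$, with $o_F$ cone points of orders $p_1^F,\dots,p_{o_F}^F$ and one cusp for each of the $n_F-o_F$ orbits of punctures. Regarding the cone points and puncture-orbits together as $n_F$ marked points recovers the surface $S_{g_F}^{n_F}$, and the branched covering $S_g^n\to\mathcal{O}$ is the object linking the two mapping class groups.

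Next I would construct the descent map $\Phi\colon NF\to\Mod(\mathcal{O})$. Given $[\phi]\in NF$, the normalizer acts on the nonempty $F$-fixed locus in Teichmüller space, so I can choose a representative $\phi$ normalizing the isometric $F$-action, i.e. $\phi F\phi^{-1}=F$; such a $\phi$ descends to an orbifold homeomorphism $\bar\phi$ of $\mathcal{O}$, and $[\phi]\mapsto[\bar\phi]$ is a well-defined homomorphism once one checks that $F$-equivariantly isotopic maps descend to isotopic maps. Every element of $F$ lies in $NF$ and descends to the identity, so $F\leq\ker\Phi$; Birman–Hilden theory identifies the kernel with exactly $F$, giving an injection $WF=NF/F\hookrightarrow\Mod(\mathcal{O})$.

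Finally I would control the image and pass to $\Mod_{g_F}^{n_F}$. By Birman–Hilden the image of $\Phi$ is the subgroup of orbifold mapping classes of $\mathcal{O}$ that lift to $S_g^n$, and liftability is governed by the induced action on a finite characteristic quotient of $\pi_1^{\mathrm{orb}}(\mathcal{O})$, so this liftable subgroup is of finite index in $\Mod(\mathcal{O})$. To compare $\Mod(\mathcal{O})$ with $\Mod_{g_F}^{n_F}$ I note that an orbifold homeomorphism must permute cone points of equal order among themselves and preserve the punctures, so $\Mod(\mathcal{O})$ is the finite-index subgroup of $\Mod_{g_F}^{n_F}$ preserving the partition of the $n_F$ marked points by type. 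Chaining these finite-kernel and finite-index comparisons delivers $\vcd(NF)=\vcd(WF)=\vcd(\Mod_{g_F}^{n_F})$. I expect the Birman–Hilden step to be the main obstacle: one must verify the hypotheses that simultaneously pin the kernel down to $F$ and force the liftable subgroup to have finite index, which requires the quotient orbifold to be hyperbolic, $\chi^{\mathrm{orb}}(\mathcal{O})<0$, and extra care in the low-complexity cases.
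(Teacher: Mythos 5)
Your proposal is correct and is essentially the paper's own justification: the paper does not prove this lemma at all but quotes it as \cite[Proposition 2.3]{Ma11}, and the argument behind that citation is exactly the one you reconstruct --- Nielsen realization, the descent homomorphism to the orbifold mapping class group, Birman--Hilden/MacLachlan--Harvey theory to pin the kernel down to $F$ and the image to the finite-index liftable subgroup, the identification of $\Mod(S_g^n/F)$ with the type-preserving finite-index subgroup of $\Mod_{g_F}^{n_F}$, and the reading of ``finite index'' up to the finite kernel $F$, i.e.\ as commensurability, which is all the $\vcd$ equalities require. The one step you treat loosely --- upgrading $[\phi]\in NF$ to a representative with $\phi F\phi^{-1}=F$ merely from the action of $NF$ on $\operatorname{Fix}(F)\subset \mathcal{T}(S_g^n)$, which actually requires the uniqueness of Teichm\"uller extremal maps between points of the fixed locus (or an equivalent rigidity input) --- is precisely what MacLachlan--Harvey supply, and you correctly flag it, together with the hypothesis $\chi^{\mathrm{orb}}<0$, as the main obstacle.
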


\section{Genus 0}\label{sec:zero}

The main result of this section is \cref{thm:main:genus:zero} which proves the case $g=0$ of \cref{thm:main}. In the first half of this section we study the finite subgroups of $\Mod_0^n$ and their actions on $S_0^n$, our analysis has as a starting point Stukow's classification of maximal finite subgroups of $\Mod_0^n$ given in \cite{STUKOW}. In the second half of the section we apply the results in the first half to prove \cref{thm:main:genus:zero}. It is worth saying that we dealt with cases $5\leq n\leq 11$ one by one, and all the computations of $\vcd(NF)$ and $\lambda(F)$ are summarized in \cref{appendixA} by means of several tables.

\subsection{Finite subgroups of $\Mod_0^n$ and their realization}
We start by recalling Stukow's classification of maximal finite subgroups of $\Mod_0^n$.
\begin{theorem}\cite[Theorem 3]{STUKOW}\label{thm:classification:stukow}
    A finite subgroup $F$ of $\Mod_0^n$ is a maximal finite subgroup of $\Mod_0^n$ if and only if $F$ is isomorphic to one of the following:
    \begin{enumerate}
        \item a cyclic group $\Z/(n-1)$ if $n\neq 4$,
        \item the dihedral group $D_{2n}$ of order $2n$,
        \item the dihedral group $D_{2(n-2)}$ of order $2(n-2)$ if $n=5$ or $n\geq 7$,
        \item the alternating group $A_4$ if $n\equiv 4$ or $10\ (\mathrm{ mod}\ 12)$,
        \item the symmetric group $\mathfrak{S}_4$ if $n\equiv 0,2,6,8,12,14,18$ or $20\ (\mathrm{ mod }\ 24)$,
        \item the alternating group $A_5$ if $n\equiv 0,2,12,20,30,32,42$ or $50\ (\mathrm{ mod }\ 12)$.
    \end{enumerate}
\end{theorem}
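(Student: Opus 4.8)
The plan is to reduce the classification to the classical list of finite subgroups of $\mathrm{SO}(3)$ together with a combinatorial analysis of their orbits on the $n$ punctures. First I would invoke Nielsen realization (as recorded for mapping class groups in \cite[Theorem 7.2]{FM12}): for $n\geq 3$ any finite $F\leq\Mod_0^n$ is realized by a group of orientation-preserving isometries of a round metric on $S^2$ permuting the set $P$ of $n$ punctures, i.e.\ by a finite subgroup $\bar F\leq\mathrm{SO}(3)$ leaving $P$ invariant. Since every finite subgroup of $\mathrm{SO}(3)$ is conjugate to one of $\Z/m$, $D_{2m}$, $A_4$, $\mathfrak{S}_4$, or $A_5$, this already pins down the isomorphism type of $F$ to the families in the list. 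It then remains to decide, for each type and each $n$, when a realization with an invariant $n$-point set exists and when it is maximal; this also requires checking that the assignment $F\mapsto(\bar F,P)$ is faithful enough that maximality in $\Mod_0^n$ coincides with maximality of the realized configuration, which uses that a nontrivial isometry acts nontrivially on $S_0^n$ in the relevant range of $n$.

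Next I would record the orbit data, since $P$ must be a disjoint union of $F$-orbits. Using the quotient-orbifold signatures $(0;m,m)$, $(0;2,2,m)$, $(0;2,3,3)$, $(0;2,3,4)$, $(0;2,3,5)$, a cone point of order $k$ yields a single exceptional orbit of size $|F|/k$, while regular orbits have size $|F|$ and may be repeated. Thus $\Z/m$ realizes $n=am+\varepsilon_1+\varepsilon_2$ (the two poles), $D_{2m}$ realizes $n=2am+m\delta_1+m\delta_2+2\delta_3$, and $A_4,\mathfrak{S}_4,A_5$ realize $n$ as a non-negative combination of $\{4,4,6\}$, $\{6,8,12\}$, $\{12,20,30\}$ together with regular orbits of size $12,24,60$, where all $\varepsilon_i,\delta_i\in\{0,1\}$. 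Enumerating these reproduces exactly the residue classes stated for $\mathfrak{S}_4$ (mod $24$) and $A_5$ (mod $60$); and since $\mathfrak{S}_4$ and $A_5$ are maximal among finite subgroups of $\mathrm{SO}(3)$, and every finite subgroup of $\Mod_0^n$ realizes in $\mathrm{SO}(3)$, they are automatically maximal in $\Mod_0^n$ whenever realized.

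The heart of the argument is the maximality analysis for the non-maximal rotation groups $\Z/m$, $D_{2m}$, $A_4$, and this is where I expect the main difficulty. The correct criterion is that $F$ fails to be maximal exactly when the chosen configuration $P$ is invariant under some strictly larger $\bar F'\supsetneq\bar F$ in $\mathrm{SO}(3)$; abstract subgroup containment is not sufficient, so one must track how each orbit of $\bar F'$ decomposes under $\bar F$. For the cyclic case, a single regular orbit of $\Z/n$ (a regular $n$-gon) is preserved by $D_{2n}$, and a two-pole configuration of $\Z/(n-2)$ is preserved by $D_{2(n-2)}$, so the only maximal cyclic configuration uses one marked pole and one regular orbit, giving $\Z/(n-1)$; the exclusion $n=4$ appears because the pole-plus-triangle configuration is a tetrahedron and is absorbed by $A_4$. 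For $A_4$ one uses that its two order-$3$ orbits (each of size $4$) fuse into a single size-$8$ orbit of $\mathfrak{S}_4$, so the configuration extends iff both or neither is used; maximality forces exactly one, yielding $n\equiv 4,10\pmod{12}$ after also ruling out extension into $A_5$. The dihedral exclusions ($n=6$ from item (3), etc.) come from the same mechanism: two poles plus an equatorial square form an octahedron, whose symmetry group is $\mathfrak{S}_4\supsetneq D_8$.

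Finally I would assemble the converse: that every maximal finite subgroup lies in the list. Here I would argue that any realized $F$ not on the list is either a proper subgroup of one of the listed configurations or can be enlarged inside $\mathrm{SO}(3)$ while keeping $P$ invariant. The delicate bookkeeping is to determine precisely which small dihedral groups $D_{2m}$ embed into $A_4,\mathfrak{S}_4,A_5$ with a compatible $P$, and to verify these do not impose further constraints on the stated dihedral families; I would organize this by $|F|$ and by signature, settling the sporadic small values of $n$ individually. This last step, rather than the initial reduction, is the true obstacle, since it is where the precise congruence and exceptional conditions must be forced.
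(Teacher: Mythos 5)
The statement you are proving is not proved in the paper at all: it is quoted verbatim from Stukow (cited as \cite[Theorem 3]{STUKOW}), so there is no internal argument to compare against, and the relevant comparison is with Stukow's own proof, whose structure your outline essentially reproduces. Your skeleton (Nielsen realization, reduction to finite subgroups of $\mathrm{SO}(3)$ leaving the puncture set invariant, orbit bookkeeping from the quotient signatures, then an extension analysis) is the natural route, and the combinatorics you carry out are correct: the subset sums of $\{6,8,12\}$ modulo $24$ and of $\{12,20,30\}$ modulo $60$ give exactly the stated residue classes (your ``mod $60$'' is right and in fact exposes a typo in the paper's item (6), which says ``mod $12$''), and the exceptional mechanisms you identify ($n=4$: a pole plus a triangle is a tetrahedron, absorbed by $A_4$; $n=6$: two poles plus a square is an octahedron, absorbed by $\mathfrak{S}_4$) are precisely what force the exclusions in items (1) and (3). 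One small point your proof should make explicit: the statement carries an implicit lower bound on $n$ (for $n=3$ the group $\Mod_0^3\cong\mathfrak{S}_3$ is finite, so the $\Z/2$ of item (1) is certainly not maximal).

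The genuine gap is your maximality criterion, and it matters for the direction ``every maximal finite subgroup is on the list'' and for all the exclusions. You assert that $F$ fails to be maximal \emph{exactly} when \emph{the chosen} configuration $P$ is invariant under some strictly larger finite subgroup of $\mathrm{SO}(3)$. The forward implication is fine, but the converse is false as stated: if $F\subsetneq F'$, realizing $F'$ and restricting produces \emph{some} realization of $F$ whose configuration extends, and it need not be the chosen one, since free orbits of the two realizations may sit at different positions. Your own $n=4$ example shows this: a pole plus a free $\Z/3$-orbit forms a regular tetrahedron only when the free orbit lies at one specific latitude; a generically positioned configuration is invariant under no finite subgroup of $\mathrm{SO}(3)$ strictly containing the rotation group, yet the subgroup $\Z/3\leq\Mod_0^4$ it generates is \emph{not} maximal. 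The same issue recurs in every non-maximality argument (fusing two free $A_4$-orbits into a regular $\mathfrak{S}_4$-orbit, placing a free $A_4$-orbit at the edge midpoints, repositioning a free cyclic orbit onto the equator to add a flip, etc.). What is missing is a rigidity/conjugacy lemma: two realizations with the same orbit data (same occupied exceptional orbits, same number of free orbits) generate conjugate subgroups of $\Mod_0^n$ --- equivalently, free orbits can be repositioned by equivariant isotopy and isomorphic finite subgroups of $\mathrm{SO}(3)$ can be conjugated compatibly with the marked orbits. This is essentially the content of the companion result \cref{thm:maximal:finite:stukow} quoted in the paper, and it is provable (uniformization plus equivariant isotopy in the quotient orbifold), but it is never stated or established in your outline; without it, ``extendable orbit data'' does not yield ``non-maximal subgroup'', and the exclusions $n=4$, $n=6$ and the $A_4$ congruences do not follow. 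By contrast, the maximality of the listed groups can be made to avoid the lemma entirely: it suffices to check that \emph{no} admissible orbit data of the given type with total $n$ occurs as the restriction of the orbit data of any strictly larger finite subgroup of $\mathrm{SO}(3)$, a purely combinatorial statement quantified over all realizations, which is how you should organize that half.
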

Stukow also studied the conjugacy classes of finite subgroups of $\Mod_0^n$.
\begin{theorem}\cite[Theorem 4, Corollary 5]{STUKOW}\label{thm:maximal:finite:stukow}
    Let $F$ be a finite subgroup of $\Mod_0^n$. Then the set of conjugacy classes of subgroups of $\Mod_0^n$ isomorphic to $F$ has two elements if
    \begin{enumerate}
        \item $F\cong \Z/2$ with $n$ even.
        \item $F\cong D_{2m}$ with $2m|n$ or $2m|n-2$, 
    \end{enumerate}
    and one element otherwise. In particular, any two maximal finite subgroups of $\Mod_0^n$ are conjugate if and only if they are isomorphic.
\end{theorem}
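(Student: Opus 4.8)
The plan is to reduce the statement to the combinatorics of finite subgroups of $SO(3)$ acting on an invariant point set, the separating invariant being the signature of the quotient orbifold. First I would use Nielsen realization, as in \cref{mcg}: the $F$-action extends over the filled sphere $S^2$, and every finite orientation-preserving action on $S^2$ is topologically conjugate to an orthogonal one. Hence each finite $F \le \Mod_0^n$ is realized by an embedding $F \hookrightarrow SO(3)$ together with an $F$-invariant set $P$ of $n$ points, and two such subgroups are conjugate in $\Mod_0^n$ exactly when the pairs $(F \hookrightarrow SO(3), P)$ correspond under some element of $N_{SO(3)}(F)$, up to an automorphism of $F$. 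Because every group in \cref{thm:classification:stukow} embeds in $SO(3)$ uniquely up to conjugacy and automorphism, fixing the isomorphism type fixes the embedding, and the only remaining freedom is the $F$-orbit decomposition of $P$.

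Next I would record, for each $F \le SO(3)$, its singular orbits (those meeting a rotation axis), since free orbits have size $|F|$ and carry no discrete invariant, contributing only multiples of $|F|$ to $n$. Writing $P$ as a union of orbits, the conjugacy class is then encoded by which singular orbits lie in $P$: a punctured singular orbit becomes a puncture of $S_0^n/F$, an unpunctured one an elliptic point of order equal to its stabilizer. This is exactly the signature $(g_F; p_1^F, \dots, p_{o_F}^F)$ together with $n_F$ of \cref{mcg}, which is a conjugacy invariant since the quotient orbifold is. The remaining equivalences are generated by the normalizer permuting singular orbits of equal stabilizer type, so the problem becomes counting normalizer-orbits of admissible puncture patterns for fixed $n$.

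I would then run the case analysis. For $F \cong \Z/k$ the two poles are fixed and the normalizer swaps them, so a pattern is the number $j \in \{0,1,2\}$ of punctured poles with $n \equiv j \pmod k$; for $k \ge 3$ the residues $0,1,2$ are distinct mod $k$, so $j$ is forced (one class), whereas for $k = 2$ both $j = 0$ and $j = 2$ give $n \equiv 0 \pmod 2$ but distinct signatures, yielding two classes precisely when $n$ is even, which is case (1). For $F \cong D_{2m}$ the singular orbits are the pole orbit (size $2$, stabilizer $\Z/m$) and two equatorial flip-orbits (each size $m$, stabilizer $\Z/2$), swapped by a rotation of order $2m$ in $N_{SO(3)}(D_{2m})$ (which equals $D_{4m}$ for $m \ge 3$); a pattern is a pair $(\epsilon_0, s)$, $\epsilon_0 \in \{0,1\}$ and $s \in \{0,1,2\}$ the number of punctured equatorial orbits, with $n \equiv 2\epsilon_0 + ms \pmod{2m}$. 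Since $(0,0)$ and $(0,2)$ both give $n \equiv 0$, and $(1,0)$ and $(1,2)$ both give $n \equiv 2$, while $(0,1)$ and $(1,1)$ sit at the isolated residues $m$ and $m+2$, and since trading two size-$m$ orbits for free orbits changes the number of order-$2$ elliptic points, I obtain two distinct classes exactly when $2m \mid n$ or $2m \mid n-2$ and one otherwise, which is case (2). For $F \in \{A_4, \mathfrak{S}_4, A_5\}$ the singular orbits (identifying the two size-$4$ orbits of $A_4$ by its normalizer $\mathfrak{S}_4$) have distinct subset-sums modulo $|F|$, so no residue collision occurs and each admissible $n$ gives a unique class. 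The \emph{in particular} clause then follows because the maximal dihedral groups $D_{2n}, D_{2(n-2)}$ of \cref{thm:classification:stukow} fail both divisibility conditions of case (2), and the maximal cyclic and polyhedral groups lie in the one-class regime.

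The main obstacle I expect is the dihedral case: proving that the two equatorial flip-orbits are genuinely interchanged by the normalizer, so that puncturing one versus the other does not spawn extra classes, while $(\epsilon_0,0)$ and $(\epsilon_0,2)$ at the same $n$ remain non-conjugate. For even $m$ this means handling the two conjugacy classes of reflections in $D_{2m}$ and checking that the order-$2m$ rotation in the normalizer swaps both the reflection classes and the equatorial orbits simultaneously; the small cases (notably $m = 2$, where $N_{SO(3)}(D_4) = \mathfrak{S}_4$ is larger) need a separate, routine check. Non-conjugacy of $(\epsilon_0,0)$ and $(\epsilon_0,2)$ is then secured by their differing counts of order-$2$ elliptic points, a homeomorphism invariant of the quotient orbifold; the rest is bookkeeping of residues modulo $|F|$.
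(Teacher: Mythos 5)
The paper contains no proof of this statement: it is quoted verbatim from Stukow \cite{STUKOW}, and the surrounding text (the explicit realizations of the maximal actions and \cref{prop:nF}) takes the classification as given. So your proposal cannot be checked against an internal argument; it has to stand on its own as a reconstruction of the cited result. As such, your geometric strategy --- realize $F$ orthogonally, record which singular orbits of $F\le SO(3)$ meet the puncture set, and count patterns modulo the normalizer --- is the right one, and its combinatorial core checks out: for $\Z/k$ with $k\ge 3$ the number $j\in\{0,1,2\}$ of punctured poles is forced by $n \bmod k$, while for $k=2$ the patterns $j=0$ and $j=2$ coexist exactly when $n$ is even; for $D_{2m}$ the residues $2\epsilon_0+ms \bmod 2m$ collide exactly in the pairs $\{(\epsilon_0,0),(\epsilon_0,2)\}$, giving two classes precisely when $2m\mid n$ or $2m\mid n-2$, with non-conjugacy certified by the differing number of order-$2$ elliptic points of the quotient; your separate treatment of $m=2$ (three axes interchanged by the octahedral normalizer) and the collision-free subset sums for $A_4$, $\mathfrak{S}_4$, $A_5$ are correct, as is the deduction of the \emph{in particular} clause.

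The step that needs repair is the very first reduction, and everything rests on it. Conjugacy of two finite subgroups of $\Mod_0^n$ is implemented by a mapping class, i.e.\ by an arbitrary orientation-preserving homeomorphism of the punctured sphere up to isotopy, and your claim that it is equivalent to correspondence ``under some element of $N_{SO(3)}(F)$'' is not automatic: it conflates topological conjugacy with orthogonal conjugacy, and it silently uses the nontrivial rigidity fact that finite subgroups of $\mathrm{Homeo}^+(S_0^n)$ with conjugate images in $\Mod_0^n$ are themselves topologically conjugate (the injectivity companion of Nielsen realization, valid here because $\chi(S_0^n)<0$ in all relevant cases). The repair is exactly the mechanism you use informally afterwards, but it must be stated and proved: (i) any conjugating homeomorphism descends to a homeomorphism of the quotient orbifolds, so the pattern --- which elliptic points are punctured, together with their orders --- is a conjugacy invariant; and (ii) two invariant sets with the same pattern are equivalent under an $F$-equivariant homeomorphism, because an ambient isotopy of the quotient orbifold fixing the elliptic points and matching the two puncture images lifts to an isotopy upstairs whose time-one map commutes with $F$. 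With (i) and (ii) in place, conjugacy classes are identified with patterns modulo the permutations of singular orbits realized by homeomorphisms normalizing $F$, and your residue bookkeeping finishes the proof; without them, the passage from mapping-class conjugacy to normalizer orbits of patterns is an assertion rather than a proof.
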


Since, for a given finite subgroup $F$ of $\Mod_0^n$, the number $n_F$ only depends on the conjugacy class of $F$, we only have to work with a representative of such conjugacy class. Let us describe representatives of the conjugacy classes of maximal finite subgroups of $\Mod_0^n$ of types (1)-(3) following the numeration of \Cref{thm:maximal:finite:stukow}. For this let us denote by $\Nte$ and $\Sur$ the north and south poles of $S_0$ respectively. Also given any group isomorphic to $D_{2m}$ recall that there is an index 2 cyclic subgroup of order $m$, we call this subgroup the \emph{subgroup of rotations} and every element which do not belong to the subgroup of rotations is called a \emph{reflection} (all such elements have order 2).

\begin{enumerate}
    \item Up to a self homeomorphism of $S_0^n$, the action of $\Z/(n-1)$ can be realized as follows: we place one of the punctures on $\Nte$ and the remaining $n-1$ on the equator equidistantly. The generator of $\Z/(n-1)$ acts on $S_0^n$ by a rotation of angle $2\pi/(n-1)$ that fixes $\Nte$ and $\Sur$.

    \item  Up to a self homeomorphism of $S_0^n$, the action of $D_{2n}$ can be realized as follows: we place all the punctures along the equator equidistantly. As in the previous case we have a natural action of the subgroup of rotations $\Z/n$ on $S_0^n$, and a reflection can be realized as a half turn that interchanges $\Nte$ and $\Sur$ and leave the set of punctures invariant.

    \item The realization of $D_{2(n-1)}$ is completely analogous to the previous case, the only difference is that we place 2 punctures on $\Nte$ and $\Sur$ and the remaining $n-2$ on the equator.
\end{enumerate}

The numeration in the following proposition is designed to be compatible with the numeration in \cref{thm:classification:stukow}. We will use the above descriptions of the actions of maximal finite groups in the proof of the following statement.

\begin{proposition}\label{prop:nF}
    Let $F$ be a finite subgroup of $\Mod_0^n$. Then the following holds:
    \begin{enumerate}
        \item Let $F\cong \Z/m\leq \Z/(n-1)$. Then 
        \[n_F=2+\frac{n-1}{m}.\]
        
        \item[(2.1)] Let $F\cong \Z/m$ be  a subgroup of the rotation subgroup $\Z/n$ of $D_{2n}$. Then
        \[n_F=2+\frac{n}{m}.\]

        \item[(2.2)] Let $F\cong D_{2m}\leq D_{2n}$ with $m>1$.\\ 
       If $n$ odd, then 
        \[n_F=1+\frac{n+3m}{2m}.\]
        If $n$ even and $2m|n$, then
        \[n_F=\begin{cases}
            1+\frac{n+2m}{2m}\\
          1+\frac{n+4m}{2m}
        \end{cases}\]
        corresponding to the two conjugacy classes of subgroups isomorphic to $D_{2m}$.\\ If $n$ even and $2m\not |n$, then 
        \[n_F=1+\frac{n+3m}{2m}.\]

        \item[(3.1)] Let $F\cong \Z/m$ be a subgroup of the rotation subgroup $\Z/(n-2)$ of $D_{2(n-2)}$. Then
        \[n_F=2+\frac{n-2}{m}.\]

        \item[(3.2)] Let $F\cong D_{2m}\leq D_{2(n-2)}$ with $m>1$.\\ If $n$ odd, then 
        \[n_F=1+\frac{n-2+3m}{2m}.\]
        If $n$ even and $2m|(n-2)$, then
        \[n_F=\begin{cases}
            1+\frac{n-2+2m}{2m}\\
            1+\frac{n-2+4m}{2m}
        \end{cases}\]
        corresponding to the two conjugacy classes of subgroups isomorphic to $D_{2m}$.\\ If $n$ even and $2m\not |(n-2)$, then 
        \[n_F=1+\frac{n-2+3m}{2m}.\] 
    \end{enumerate}    
\end{proposition}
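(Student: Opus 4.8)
The plan is to compute $n_F$ directly from the explicit realizations of the maximal finite subgroups described just before the proposition, using the orbifold count $n_F = o_F + (\text{number of orbits of punctures})$, where $o_F$ is the number of elliptic points of $S_0^n/F$. Since $n_F$ depends only on the conjugacy class of $F$, and by \cref{thm:maximal:finite:stukow} every subgroup is conjugate into one of the listed maximal groups with the stated actions, I would work case-by-case with the rotation and dihedral actions on the sphere. The two genuinely distinct sources contributing to $n_F$ are: the orbits of the $n$ punctures under $F$, and the fixed points of nontrivial elements (the elliptic points of the quotient orbifold). My strategy in each case is to identify the fixed-point data of the action on $S_0 = S^2$ and then tally orbits of punctures separately from orbifold points.

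For the cyclic cases (1), (2.1), (3.1), the action of $\Z/m$ is by rotation fixing exactly the two poles $\Nte$ and $\Sur$, so the quotient orbifold is again a sphere with exactly $o_F = 2$ elliptic points (the images of the poles). It then remains to count orbits of punctures. In case (1) the $n-1$ equatorial punctures lie in free $\Z/m$-orbits (the generator rotates by $2\pi/(n-1)$, and $\Z/m$ acts freely away from the poles), giving $(n-1)/m$ orbits, plus the puncture placed at a pole is already counted among the orbifold points; this yields $n_F = 2 + (n-1)/m$. Cases (2.1) and (3.1) are identical in spirit with $n$ equatorial punctures (all on the equator, none at the poles), so the $n/m$ (resp. $(n-2)/m$) equatorial orbits add to the two polar orbifold points, giving $n_F = 2 + n/m$ and $n_F = 2 + (n-2)/m$. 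The key routine point here is verifying that a puncture sitting at a pole is simultaneously an elliptic point, so it must be counted once, not twice; the inequality $n_F \le \frac{n}{|F|} + o_F$ from \cref{mcg} serves as a consistency check.

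The dihedral cases (2.2) and (3.2) are the main obstacle, because now reflections (the order-two elements outside the rotation subgroup) introduce additional fixed points and the orbifold quotient is a disk (a sphere quotiented by the full $D_{2m}$ becomes a surface with boundary/corner reflector structure, but as a topological surface it is again a sphere after capping, with elliptic points coming from the cone points of the rotation axis and the reflection axes). I would analyze the action of $D_{2m}$ on $S^2$: the rotation subgroup $\Z/m$ fixes the two poles, and each reflection is a half-turn interchanging $\Nte \leftrightarrow \Sur$ whose fixed locus meets the equator in two antipodal points. The careful bookkeeping is whether the $m$ reflection axes pass through punctures or through gaps between them — this is exactly what distinguishes the subcases by the parity of $n$ and by whether $2m \mid n$ (resp. $2m \mid n-2$). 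When $n$ is odd, the reflections are forced into a single symmetry type; when $n$ is even there are two conjugacy classes (consistent with \cref{thm:maximal:finite:stukow}(2)), corresponding to reflection axes that either pass through two punctures or avoid all punctures, changing the orbit count and the elliptic-point count. I expect the crux to be correctly enumerating, for each parity and divisibility condition, how the equatorial punctures group into orbits under $D_{2m}$ (orbits of generic size $2m$, but of size $m$ for punctures lying on a reflection axis) and how many order-two elliptic points the axes produce; assembling these gives the stated formulas $1 + \frac{n+3m}{2m}$, $1 + \frac{n+2m}{2m}$, $1 + \frac{n+4m}{2m}$, and their genus-zero analogues with $n$ replaced by $n-2$. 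Throughout I would use the Riemann–Hurwitz / orbifold Euler characteristic relation as an independent verification that the computed signature is consistent with $g_F = 0$ and with $\chi(S_0^n)$.
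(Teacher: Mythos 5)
Your overall strategy --- realize each subgroup via the explicit rotation/dihedral actions described before the proposition and compute $n_F$ by tallying orbits of punctures together with elliptic points of the quotient --- is exactly the paper's, and your treatment of the cyclic cases (1), (2.1), (3.1) is complete and correct (the paper counts the pole carrying a puncture as a puncture orbit rather than as an elliptic point, but either bookkeeping contributes exactly $1$ to $n_F$).

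In the dihedral cases (2.2) and (3.2), however --- which are the real content of the proposition --- your proposal stops exactly where the proof has to begin. You correctly identify that everything hinges on which reflection axes pass through punctures, but you never carry out that determination; ``assembling these gives the stated formulas'' is precisely the step that produces the number $e$ of equatorial elliptic points ($e=m$, $e=0$, or $e=2m$) in the various subcases, and without it none of the formulas $1+\frac{n+3m}{2m}$, $1+\frac{n+2m}{2m}$, $1+\frac{n+4m}{2m}$ is established. The paper does this by fixing a reflection $\tau$ and a generator $t$ of the rotation subgroup $\Z/m$, noting that between a point fixed by $\tau$ on the equator and its image under $t$ there lie $n/m-1$ punctures, and reading off from the parity of that number whether the bisecting axes hit punctures or elliptic points; a Burnside count over $D_{2m}$ (identity fixes everything, nontrivial rotations fix nothing on the equator, each reflection fixes two points) then gives $n_F = 1 + \frac{n+e+2m}{2m}$. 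Moreover, the one concrete structural claim you do make --- that for $n$ even there are two conjugacy classes, ``corresponding to reflection axes that either pass through two punctures or avoid all punctures'' --- is false in the subcase $n$ even, $2m \nmid n$: there $n/m$ is odd, $m$ is even, there is a single conjugacy class, and every copy of $D_{2m}$ contains both kinds of reflections simultaneously (half its axes pass through punctures, half through elliptic points), which is exactly why $e=m$ and the formula agrees with the odd case. The dichotomy you describe, and the resulting two conjugacy classes, occur only when $2m \mid n$ (resp.\ $2m \mid n-2$), in accordance with \cref{thm:maximal:finite:stukow}. So the parity/divisibility analysis is not a routine verification to be deferred; it is the proof.
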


\begin{proof} We proceed by cases:
\begin{enumerate}
    \item[(1)] We can realize the $\Z/m$-action placing a puncture in $\Nte$ and the remaining $n-1$ equidistantly on the equator. Hence \{$\Nte$\} and \{$\Sur\}$ are both orbits of elliptic points of the $\Z/m$-action, while the orbits of punctures are are exactly $(n-1)/m$.


    \item[(2.1)] The $\Z/m$-action can be realized placing all punctures on the equator equidistantly. The conclusion is completely analogous to case (1). 

    \item[(2.2)] Recall that $D_{2m}$ is generated by the rotation subgroup $\Z/m\leq \Z/n$ and a half turn $\tau$ that interchanges $\Nte$ and $\Sur$. In this case we have the orbit $\{\Nte,\Sur\}$. On the other hand, note that the $D_{2n}$-action is free away from the poles and the equator. Hence any elliptic point other than $\Nte$ and $\Sur$ is on the equator. Let $X$ be the set of all punctures and all the elliptic points on the equator. Note that, for elements of $D_{2m}$, the identity element fix all points of $X$, all nontrivial rotations do not fix points in $X$, and each of the $m$ reflections fix two points in $X$, hence by the Burnside lemma, the number of orbits of $X$ is 
    $\frac{|X|+2m}{2m}$.
    Therefore, if $e$ is the number of elliptic points in $X$ we have
    \[n_F=1+\frac{n+e+2m}{2m}.\]

    Assume $n$ is odd, then the axis points of $\tau$ consists exactly of one puncture $p$ and its antipode $-p$ which is an elliptic point. Therefore there are $m$ elliptic points, that is $e=m$.

    Assume $n$ is even. Then the axis of reflections in $D_{2m}$ are obtained by $\Z/m$-rotating the axis of $\tau$ and taking the bisectors of any two consecutive rotations of the axis of $\tau$.

     If $2m$ divides $n$, $n/m$ is even. Let $p$ be a fixed puncture and let $t$ be a fixed generator of $\Z/m$. Then between $p$ and $tp$ there are $n/m-1$ punctures, that is, an odd number of punctures. If $\tau$ fixes two punctures, then each bisector also fixes two punctures. Therefore there are no elliptic points and $e=0$. If $\tau$ fixes two elliptic points, then each bisector also fixes two elliptic points. Therefore $e=2m$.
    
    If $2m$ does not divide $n$, then $n/m$ is odd and $m$ is even. Let $p$ be a fixed puncture and let $t$ be a fixed generator of $\Z/m$. Then between $p$ and $tp$ there are $n/m-1$ punctures, that is, an even number of punctures. Hence, whenever $\tau$ fixes two punctures then each bisector fixes two elliptic points, and whenever $\tau$ fixes two elliptic points then each bisector fixes two punctures. In both scenarios we conclude $e=m$.

\end{enumerate}

Cases (3.1) and (3.2) are analogous to cases (2.1) and (2.2) respectively and are left as an exercise to the reader. 
\end{proof}

\subsection{Proof of the main theorem for genus 0} 
We compute in this subsection, \cref{thm:main:genus:zero} below, the geometric dimension for $\Mod_0^n$.

\begin{lemma}\label{lemma:vcdNF:sphere}
    Let $F$ be a finite subgroup of $\Mod_0^n$, then $\vcd(NF)=\max\{n_F-3,0\}$.
\end{lemma}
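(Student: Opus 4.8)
The plan is to combine \cref{lemma:Maher} with Harer's computation of the $\vcd$ of mapping class groups from \cref{vcd:mcg}. By \cref{lemma:Maher}, for any finite subgroup $F\leq \Mod_0^n$ we have $\vcd(NF)=\vcd(\Mod_{g_F}^{n_F})$, and since every finite subgroup of $\Mod_0^n$ acts on the genus-zero surface $S_0^n$, the quotient orbifold $S_0^n/F$ again has genus zero, so $g_F=0$. Thus the task reduces to evaluating the genus-zero cases of \cref{vcd:mcg}, which gives $\vcd(\Mod_0^{n_F})=n_F-3$ when $n_F\geq 3$ and $\vcd(\Mod_0^{n_F})=0$ when $n_F<3$. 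These two cases assemble into the single formula $\vcd(NF)=\max\{n_F-3,0\}$, which is exactly the claim.

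So the proof is essentially immediate once I justify $g_F=0$. First I would observe that $S_0^n$ is a sphere with punctures, and the finite group $F$ acts on it (via the hyperbolic or conformal structure furnished by Nielsen realization, \cref{mcg}, at least when $\chi(S_0^n)<0$). The quotient of a genus-zero surface by a finite group of orientation-preserving homeomorphisms is again a genus-zero orbifold: the underlying surface of $S_0^n/F$ is a sphere, since a branched cover of a positive-genus surface would force the total space to have positive genus by the Riemann--Hurwitz formula. Hence $g_F=0$ and the genus-zero branch of \cref{vcd:mcg} applies.

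Then I would simply substitute $g_F=0$ into \cref{vcd:mcg}: the relevant cases are $\vcd(\Mod_0^{n_F})=0$ for $n_F<3$ and $\vcd(\Mod_0^{n_F})=n_F-3$ for $n_F\geq 3$. Combining these yields $\vcd(\Mod_{0}^{n_F})=\max\{n_F-3,0\}$, and by \cref{lemma:Maher} this equals $\vcd(NF)$.

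I do not expect a genuine obstacle here, since the lemma is a direct corollary of results already established in the preliminaries; the only point requiring a word of care is the edge case $\chi(S_0^n)\geq 0$ (that is, small $n$), where Nielsen realization as quoted in \cref{mcg} is stated under the hypothesis $\chi(S_0^n)<0$. For those small values of $n$ the group $\Mod_0^n$ is finite (indeed $\vcd=0$), every $n_F\leq n$ is small so $\max\{n_F-3,0\}=0$, and the formula holds trivially; I would dispose of these cases by hand or note that $g_F=0$ and $n_F\le n$ force the right-hand side to vanish in agreement with $\vcd(NF)=0$. Thus the main content is the observation $g_F=0$ together with a direct appeal to \cref{lemma:Maher} and \cref{vcd:mcg}.
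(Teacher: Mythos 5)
Your proof is correct and is essentially the paper's own argument: Nielsen realization plus Riemann--Hurwitz to conclude $g_F=0$, then \cref{lemma:Maher} combined with Harer's computation \cref{vcd:mcg} to get $\vcd(NF)=\max\{n_F-3,0\}$. The only slip is in your side remark on the small-$n$ edge case: the inequality $n_F\le n$ can fail (for $n=2$ and $F$ generated by a rotation swapping the two punctures one has $o_F=2$, so $n_F=3$), but your conclusion is unaffected since $n_F\le 3$ whenever $n\le 3$, giving $\max\{n_F-3,0\}=0=\vcd(NF)$ there; the paper's proof does not address this degenerate case at all.
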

\begin{proof}
    By Nielsen realization $F$ acts on $S_0^n$ by orientation preserving homeomorphisms.
    As a straightforward application of the Riemann-Hurwitz theorem, we have that $S_0^n/F$ is homeomorphic to a sphere, that is $g_F=0$. Now the result is a consequence of \Cref{lemma:Maher} and Harer's computation of the virtual cohomological dimension for mapping class groups given in \cref{vcd:mcg}.
\end{proof}

\begin{proposition}\label{prop:Weyl:cyclic}
    Let $n\geq 3$ and let $g\in \Mod_0^n$ be an element of finite order $r\geq 2$. Then $r\leq n$ and
    \[\vcd(W(g))\leq \frac{n}{r}-1\leq \frac{n}{2}-1\]
\end{proposition}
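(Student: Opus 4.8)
The plan is to reduce the statement to a computation of the invariant $n_F$ for the cyclic subgroup $F=\langle g\rangle\cong\Z/r$, and then to read off $n_F$ from the rotation action of $F$ on the sphere. First I would apply \cref{lemma:vcdNF:sphere}, which (via \cref{lemma:Maher} and Harer's formula) gives for the finite subgroup $F\leq\Mod_0^n$
\[
\vcd(W(g))=\vcd(NF)=\max\{n_F-3,\,0\},
\]
since $NF$ and $W(g)=WF$ are commensurable. Hence the whole statement reduces to the two estimates $r\leq n$ and $n_F\leq 2+\tfrac{n}{r}$.

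To obtain these I would describe the geometry of the action. Since $n\geq 3$ we have $\chi(S_0^n)<0$, so Nielsen realization applies; moreover any finite-order orientation-preserving self-map of $S_0$ is conjugate to a rotation, so $g$ acts as a rotation of order $r$ about an axis and every power $g^j$ is a rotation about the same axis. Consequently the $F$-action on $S_0$ has exactly the two poles $\Nte,\Sur$ as fixed points, each with stabiliser $F$, and all other orbits free of size $r$. Let $k\in\{0,1,2\}$ be the number of punctures sitting at the poles. The remaining $n-k$ punctures form $(n-k)/r$ free orbits, while the poles that are not punctures become elliptic points, so $o_F=2-k$. Therefore
\[
n_F=o_F+\Big(k+\frac{n-k}{r}\Big)=2+\frac{n-k}{r}\leq 2+\frac{n}{r},
\]
in agreement with the cyclic cases $k=0,1$ of \cref{prop:nF}.

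Finally I would deduce $r\leq n$ and conclude. As $n\geq 3$ and $k\leq 2$ there is at least one non-pole puncture, whose orbit is free of size $r$; hence $r\leq n-k\leq n$ and in particular $n/r\geq 1$. Plugging $n_F\leq 2+\tfrac{n}{r}$ into the first display gives
\[
\vcd(W(g))\leq\max\Big\{\frac{n}{r}-1,\,0\Big\}=\frac{n}{r}-1,
\]
and $r\geq 2$ finishes with $\tfrac{n}{r}-1\leq\tfrac{n}{2}-1$. I expect the only real subtlety to be the middle step: justifying that the mapping class is realised by an honest rotation with precisely two fixed points, and then correctly separating the poles into those that are punctures (counted among the puncture-orbits) and those that are genuine cone points (counted in $o_F$). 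Once this dichotomy is in place, both the bound on $n_F$ and the inequality $r\leq n$ are immediate.
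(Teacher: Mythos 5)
Your proof is correct, but it reaches the key geometric picture by a genuinely different route than the paper. The paper deduces that $g$ acts as a rotation from Stukow's classification: by \cref{thm:classification:stukow} and \cref{thm:maximal:finite:stukow}, up to conjugacy $g$ lies in the rotation subgroup of one of the maximal finite subgroups $\Z/(n-1)$, $D_{2n}$ or $D_{2(n-2)}$, whose explicit realizations (punctures on the equator and possibly at the poles) were described before \cref{prop:nF}; this also gives $r\leq n$ for free, since $r$ divides $n$, $n-1$ or $n-2$. You instead bypass the classification of maximal finite subgroups entirely: you apply Nielsen realization to the cyclic group $\langle g\rangle$ itself and invoke the classical Brouwer--Ker\'ekj\'art\'o--Eilenberg theorem that a finite-order orientation-preserving homeomorphism of $S^2$ is topologically conjugate to a rotation (alternatively, since the realization is by isometries of a hyperbolic structure, the capped action is conformal on the Riemann sphere, hence a finite-order M\"obius transformation with exactly two fixed points). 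Both arguments then conclude identically, via $n_F\leq 2+n/r$ and \cref{lemma:vcdNF:sphere}. Your route is more self-contained relative to this proposition---it needs no information about which finite subgroups of $\Mod_0^n$ are maximal---and it yields slightly more: the exact value $n_F=2+(n-k)/r$ together with the divisibility $r\mid n-k$ for some $k\in\{0,1,2\}$ (thereby recovering Stukow's constraint on orders of cyclic subgroups), plus an elementary proof of $r\leq n$ from the existence of a free orbit of punctures. What it costs is the appeal to Ker\'ekj\'art\'o's theorem, an external classical result the paper never needs to cite because Stukow's theorems are already in play throughout \cref{sec:zero}; your observation that the whole statement hinges on correctly splitting the poles into punctures versus cone points is exactly the bookkeeping the paper's definition of $n_F$ is designed to handle.
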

\begin{proof}
By \cite[Theorem 4, Corollary 5]{STUKOW} the element $g$ is contained in a unique (up to isotopy) maximal finite cyclic subgroup of order $n$, $n-1$ or $n-2$, which by \cref{thm:maximal:finite:stukow} and \cref{thm:classification:stukow}, is the rotation subgroup of  one of the maximal subgroups $\Z/(n-1)$, $D_{2n}$ or $D_{2(n-2)}$. Hence, up to conjugation, $g$ can be realized as a rotation that fixes $\Nte$ and $\Sur$ and the punctures are on the equator and possible in one or both poles. Therefore $n_F\leq 2 + n/r$, and the first inequality in our statement follows from \cref{lemma:vcdNF:sphere}. The second inequality is clear.
\end{proof}

\begin{proposition}\label{prop:vcdpluslegth:cyclic:case}
    Let $n\geq 11$ and $g\in \Mod_0^n$ be an element of order $r\geq 2$. Then
    \[\vcd(W(g))+\lambda(\langle g \rangle)\leq \vcd(\Mod_0^n).\]
\end{proposition}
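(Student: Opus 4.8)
The plan is to combine the upper bound on $\vcd(W(g))$ from \cref{prop:Weyl:cyclic} with the length bound from \cref{lem:length:vcdWF}(1), and then reduce the whole statement to an elementary inequality in a single real variable. First I would recall that, since $n\geq 11\geq 3$, Harer's computation \cref{vcd:mcg} gives $\vcd(\Mod_0^n)=n-3$. By \cref{prop:Weyl:cyclic} we have $\vcd(W(g))\leq \frac{n}{r}-1$ and $r\leq n$, while $\langle g\rangle\cong\Z/r$ has order $r$, so \cref{lem:length:vcdWF}(1) gives $\lambda(\langle g\rangle)\leq \log_2 r$. Adding these, it suffices to prove
\[
\frac{n}{r}-1+\log_2 r \leq n-3,\qquad\text{equivalently}\qquad \frac{n}{r}+\log_2 r \leq n-2,
\]
for every integer $r$ with $2\leq r\leq n$.

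Next I would treat the left-hand side as a function $f(r)=\frac{n}{r}+\log_2 r$ of the real variable $r\in[2,n]$ and show that it is convex on this interval: a direct computation gives $f''(r)=\frac{1}{r^2}\bigl(\frac{2n}{r}-\frac{1}{\ln 2}\bigr)$, which is positive whenever $r<2n\ln 2$, and this holds throughout $[2,n]$ because $n<2n\ln 2$. Since a convex function on a closed interval attains its maximum at an endpoint, it is enough to verify the inequality at $r=2$ and at $r=n$. At the left endpoint $f(2)=\frac{n}{2}+1\leq n-2$ precisely when $n\geq 6$; at the right endpoint $f(n)=1+\log_2 n\leq n-2$, i.e. $\log_2 n\leq n-3$, which holds comfortably for $n\geq 11$. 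Both endpoint inequalities therefore hold under the hypothesis $n\geq 11$, and convexity propagates them to all $r\in[2,n]$, completing the argument.

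I do not expect a genuine obstacle here; the only point requiring a little care is justifying that the extreme case is $r=2$ rather than some intermediate value, and the convexity computation is the cleanest way to pin this down (it also yields the sharper uniform bound $\vcd(W(g))+\lambda(\langle g\rangle)\leq \tfrac{n}{2}$). It is worth noting that the inequality in fact holds already for $n\geq 6$; the stated hypothesis $n\geq 11$ simply reflects the convention of this section, where the remaining small values $5\leq n\leq 11$ are handled individually via the explicit tables.
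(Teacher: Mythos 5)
Your proof is correct, and its skeleton is the same as the paper's: both combine \cref{prop:Weyl:cyclic} with \cref{lem:length:vcdWF}(1) to reduce the proposition to the elementary inequality $\frac{n}{r}-1+\log_2 r\leq n-3$ for integers $2\leq r\leq n$. The genuine difference lies in how this inequality is checked. The paper decouples the two occurrences of $r$, estimating $\frac{n}{r}\leq\frac{n}{2}$ and $\log_2 r\leq\log_2 n$ simultaneously (and dropping the $-1$ along the way), and then asserts that $\frac{n}{2}+\log_2 n\leq n-3$ for $n\geq 11$; as literally written this last assertion is false for $n=11,12,13$ (at $n=11$ the left-hand side is about $8.96$ while the right-hand side is $8$), and it becomes valid only for $n\geq 14$, or for $n\geq 11$ if the $-1$ is retained. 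Your argument instead keeps the function $f(r)=\frac{n}{r}+\log_2 r$ coupled and uses convexity on $[2,n]$ to reduce to the endpoints, where $f(2)=\frac{n}{2}+1$ and $f(n)=1+\log_2 n$ are each visibly smaller than the mixed bound $\frac{n}{2}+\log_2 n$. This buys two things: it genuinely justifies the stated hypothesis $n\geq 11$ (indeed it proves the inequality for all $n\geq 6$, with the uniform bound $\vcd(W(g))+\lambda(\langle g\rangle)\leq\frac{n}{2}$), and it pinpoints why the decoupled estimate is too lossy --- the two worst cases $r=2$ and $r=n$ cannot occur simultaneously. The cost is a small second-derivative computation, but in the critical range $11\leq n\leq 13$ that extra care is exactly what the paper's own termwise estimate fails to provide.
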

\begin{proof} By \cref{prop:Weyl:cyclic} and \cref{lem:length:vcdWF} we get
    \begin{align*}
        \vcd(W(g))+\lambda(\langle g \rangle)&\leq \frac{n}{r}-1+\log_2(r)\\
        &\leq \frac{n}{r}-1+\log_2(n)\\
        &\leq \frac{n}{2}+\log_2(n)
    \end{align*}

    Since both $\frac{n}{2}+\log_2(n)$ and $\vcd(\Mod_0^n)=n-3$ are both increasing functions of $n$, it is easy to verify that
    \[\frac{n}{2}+\log_2(n)\leq n-3\]
    for $n\geq 11$. 
\end{proof}

\begin{theorem}\label{thm:vcdWF:lambdaF:inequality}
    Let $n= 5$ or $n\geq 7$, and let $F$ be a finite subgroup of $\Mod_0^n$. Then 
    \[\vcd(WF)+\lambda(F)\leq \vcd(\Mod_0^n)=n-3.\]
\end{theorem}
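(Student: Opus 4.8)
The plan is to verify $\vcd(WF)+\lambda(F)\le n-3$ for every finite subgroup $F\le\Mod_0^n$ by combining a uniform estimate valid for large $n$ with a finite case check for small $n$. By \cref{lemma:vcdNF:sphere} we have $\vcd(WF)=\max\{n_F-3,0\}$, and by \cref{thm:classification:stukow} every finite $F$ is, up to conjugacy, contained in a cyclic group $\Z/(n-1)$, a dihedral group $D_{2n}$ or $D_{2(n-2)}$, or one of $A_4$, $\mathfrak S_4$, $A_5$. Since every subgroup of the exceptional groups is itself cyclic, dihedral, or isomorphic to $V_4$ or $A_4$, and since $n_F$ and $\lambda(F)$ depend only on the conjugacy class of $F$ (\cref{thm:maximal:finite:stukow}), it suffices to bound $\vcd(WF)+\lambda(F)$ for one representative of each isomorphism type: cyclic, dihedral $D_{2m}$ with $m\ge 3$, the Klein four group $V_4$, $A_4$, $\mathfrak S_4$, and $A_5$.

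For cyclic subgroups the estimate is already at hand: \cref{prop:vcdpluslegth:cyclic:case} gives the inequality for all $n\ge 11$. For the non-cyclic subgroups I would argue uniformly. Choosing $g\in F$ of maximal order $r$, \cref{lem:length:vcdWF}(2) together with the commensurability of $Z(g)$ and $W(g)$ and the bound of \cref{prop:Weyl:cyclic} gives $\vcd(WF)\le\vcd(W(g))\le\frac{n}{r}-1$, while \cref{lem:length:vcdWF}(1) gives $\lambda(F)\le\log_2|F|$. It then remains to check, type by type, that $\frac{n}{r}-1+\log_2|F|\le n-3$. For $D_{2m}$ with $m\ge 3$ one has $r=m$ and $|F|=2m$, so the left-hand side equals $\frac{n}{m}+\log_2 m$, which is convex in $m$ and whose values at $m=3$ and at $m=n$ are both at most $n-3$ once $n\ge 8$; for $V_4$ (with $r=2$, $|F|=4$) the bound reads $\frac{n}{2}+1\le n-3$; and for $A_4$, $\mathfrak S_4$, $A_5$, using the exact lengths $3,4,4$ and maximal element orders $3,4,5$, the inequalities $\frac{n}{3}+2\le n-3$, $\frac{n}{4}+3\le n-3$ and $\frac{n}{5}+3\le n-3$ all hold for $n\ge 8$. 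Thus every $n\ge 12$ is covered by these uniform estimates.

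It then remains to dispose of the finitely many values $n\in\{5,7,8,9,10,11\}$. For these I would compute $n_F$ explicitly from \cref{prop:nF} (and its counterparts for the actions of the exceptional groups), read off $\vcd(WF)=\max\{n_F-3,0\}$ through \cref{lemma:vcdNF:sphere}, bound $\lambda(F)$, and check the inequality for each conjugacy class of finite subgroup; these verifications are the content of the tables in \cref{appendixA}. The only exceptional groups appearing in this range are $\mathfrak S_4\le\Mod_0^8$ and $A_4\le\Mod_0^{10}$.

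I expect the low-$n$ regime to be the main obstacle, for two reasons. First, near the threshold the crude bound $\vcd(W(g))\le\frac{n}{r}-1$ is too lossy, and one must instead use the sharp values of $n_F$ coming from \cref{prop:nF}; the exceptional groups are the genuine difficulty, since they carry length as large as $4$ while their maximal element orders ($3,4,5$) are too small for $\frac{n}{r}-1$ to absorb it, making the inequality tight exactly there. Second, this is precisely what forces the exclusion of $n=6$ from the statement: because $\mathfrak S_4\le\Mod_0^6$ and $\lambda(\mathfrak S_4)=4>3=n-3$, the inequality genuinely fails for $n=6$, so that value must be handled by a separate argument rather than through \cref{thm:aramayona:martinezperez}.
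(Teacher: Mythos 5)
Your proposal is correct and follows essentially the same route as the paper: reduce via Stukow's classification to subgroups of the maximal finite groups, bound $\vcd(WF)+\lambda(F)$ uniformly for large $n$ using \cref{prop:Weyl:cyclic}, \cref{lem:length:vcdWF} and \cref{prop:vcdpluslegth:cyclic:case}, and dispose of the remaining small values of $n$ by the explicit computations of \cref{appendixA}. The only differences are that your sharper per-type estimates (maximal-order elements, convexity of $\frac{n}{m}+\log_2 m$) lower the uniform threshold to roughly $n\geq 11$ instead of the paper's $n\geq 14$, shrinking the finite check, and one harmless slip: $A_4$ also occurs inside $\mathfrak{S}_4\leq \Mod_0^8$, a case the tables cover anyway.
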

\begin{proof}
    For $n=5$ or $7\leq n \leq 13$, the conclusion follows from the tables in \cref{appendixA}, where $\vcd(WF)$ is computed  using \cref{prop:nF} and \cref{lemma:vcdNF:sphere}. The remaining of the proof deals with the case $n\geq 14$.
    By \cref{thm:maximal:finite:stukow}, up to conjugation, $F$ is a subgroup of one of six possibilities:

    \begin{enumerate}
        \item $F\leq \Z/(n-1)$. The conclusion follows directly from \cref{prop:vcdpluslegth:cyclic:case} for $n\geq 11$.

        \item $F\leq D_{2n}$. If $F$ is cyclic, the result follows from \cref{prop:vcdpluslegth:cyclic:case} for $n\geq 11$. Assume that $F\cong D_{2m}$ with $m|n$. Then by \cref{lem:length:vcdWF} and \cref{prop:Weyl:cyclic} we get
        \[ \vcd(WF)\leq \vcd(\Z/m)\leq \frac{n}{m}-1\quad\text{and}\quad \lambda(F)\leq \log_2(m)+1. \]
        Thus \[\vcd(WF)+\lambda(F) \leq \frac{n}{m} + \log_2(m)\leq \frac{n}{2}+\log_2(n).\]
        Now note that $\frac{n}{2}+\log_2(n)\leq n-3$ for $n\geq 14$.

        \item $F\leq D_{2(n-2)}$. This case is analogous to the previous one.

        \item $F\leq A_4$. We have $\lambda(F)\leq \lambda(A_4)=3$. By \cref{lem:length:vcdWF} and \cref{prop:Weyl:cyclic} we get
        \[\vcd(WF)+\lambda(F)\leq \frac{n}{2}+3.\]
        Note that $\frac{n}{2}+3\leq n-3$ for $n\geq 10$.
        \item $F\leq \mathfrak{S}_4$. Analogous to case (4).
        \item $F\leq A_5$. Analogous to case (4).
    \end{enumerate}
    
\end{proof}

\begin{theorem}\label{thm:main:genus:zero}
    Let $n\geq 0$. Then 
    \[\gdfin(\Mod_0^n)= \cdfin(\Mod_0^n)= \vcd(\Mod_0^n)=\max\{n-3,0\}.\]
\end{theorem}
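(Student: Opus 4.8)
The plan is to derive the theorem from its cohomological counterpart: I would compute $\cdfin(\Mod_0^n)$ and then translate it into $\gdfin(\Mod_0^n)$ using the sandwich $\vcd \le \cdfin \le \gdfin \le \max\{3,\cdfin\}$ together with Lück's \cref{thm:Luck}, which promotes any cocompact model to one of dimension $\cdfin$ whenever $\cdfin \ge 3$. Throughout I use $\vcd(\Mod_0^n)=\max\{n-3,0\}$ from \cref{vcd:mcg} and the fact that, for $n\ge 3$, the truncated Teichmüller space is a cocompact $\underline{E}\Mod_0^n$. The argument is organized by the size of $n$, and essentially all the difficulty lies in the range where $\vcd\le 3$.

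For $n\ge 7$ the work is already packaged: \cref{thm:vcdWF:lambdaF:inequality} verifies the hypothesis $\vcd(WF)+\lambda(F)\le \vcd(\Mod_0^n)$ of \cref{thm:aramayona:martinezperez} for every finite $F$, so $\cdfin(\Mod_0^n)=\vcd(\Mod_0^n)=n-3\ge 4$. Since $\cdfin\ge 3$ and a cocompact model exists, \cref{thm:Luck} delivers a cocompact $\underline{E}\Mod_0^n$ of dimension $n-3$, hence $\gdfin(\Mod_0^n)=n-3$. The same two results give $\cdfin(\Mod_0^5)=2$, but for $n=5$ Lück does not apply and the conclusion must be obtained differently.

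Next I would dispatch the small cases. For $n\le 3$ the group $\Mod_0^n$ is finite, so a point is a cocompact $\underline{E}\Mod_0^n$ and $\gdfin=\cdfin=\vcd=0$. For $n=4$ one has $\vcd=1$, and since $\Mod_0^4$ is finitely generated, virtually torsion-free with virtual cohomological dimension $1$, it is virtually free; acting cocompactly on a tree with finite stabilizers gives $\gdfin=\vcd=1$. For $n=5$ we already have $\cdfin=2$, but the sandwich only yields $\gdfin\le 3$; to exclude the Eilenberg--Ganea possibility $\gdfin=3$ I would exhibit an explicit $2$-dimensional cocompact $\underline{E}\Mod_0^5$, for instance the action of $\Mod_0^5$ on a suitable contractible $2$-complex (an equivariant spine of $\T_0^5$) with contractible fixed-point sets for finite subgroups, forcing $\gdfin=2$.

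The genuinely exceptional case, and the main obstacle, is $n=6$, where $\vcd=3$ but \cref{thm:aramayona:martinezperez} is unavailable: by Stukow's \cref{thm:classification:stukow} the group $\Mod_0^6$ contains a copy of $\mathfrak{S}_4$, and $\lambda(\mathfrak{S}_4)=4$ gives $\vcd(W\mathfrak{S}_4)+\lambda(\mathfrak{S}_4)\ge 4>3=\vcd(\Mod_0^6)$, so the finite-subgroup inequality fails on this single conjugacy class. The plan is to prove $\cdfin(\Mod_0^6)=3$ by an independent argument. The key structural point is that $\mathfrak{S}_4$ is a \emph{maximal} finite subgroup whose normalizer is finite (its Weyl group has $\vcd=0$), so its conjugates form an isolated class that I would split off via a Lück--Weiermann pushout (equivalently a Bredon Mayer--Vietoris computation): the complementary family of finite subgroups still satisfies the criterion and admits a $3$-dimensional classifying space, and the $\mathfrak{S}_4$-singular part, being built over a finite normalizer, must be shown not to raise the dimension beyond $3$. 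Once $\cdfin(\Mod_0^6)=3$ is established, $\cdfin\ge 3$ lets \cref{thm:Luck} produce a cocompact $\underline{E}\Mod_0^6$ of dimension $3$. The hard part is precisely this last bounding step: controlling the contribution of the length-$4$ chain sitting below $\mathfrak{S}_4$ so that the singular piece stays $3$-dimensional, which is exactly the phenomenon that forces $n=6$ to be excluded from \cref{thm:vcdWF:lambdaF:inequality}.
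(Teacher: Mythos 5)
Your handling of $n\le 4$ and $n\ge 7$ coincides with the paper's proof (finite groups, Stallings' theorem for $n=4$, and \cref{thm:vcdWF:lambdaF:inequality} plus \cref{thm:aramayona:martinezperez} plus \cref{thm:Luck} for $n\ge 7$). The genuine gaps are in the two cases that carry all the difficulty, $n=6$ and $n=5$, where what you offer is a plan rather than a proof. For $n=6$ you correctly diagnose why the criterion of \cref{thm:aramayona:martinezperez} fails (the class of $\mathfrak{S}_4$ with $\lambda(\mathfrak{S}_4)=4$), but your proposed repair --- splitting off the conjugates of $\mathfrak{S}_4$ via a Lück--Weiermann pushout and then bounding the singular contribution --- leaves its decisive step unproven, and you say so yourself (``the hard part is precisely this last bounding step''). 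Moreover, the plan has structural problems: the Connolly--Kozniewski criterion underlying \cref{thm:aramayona:martinezperez} is formulated for the full family of finite subgroups, so the claim that ``the complementary family still satisfies the criterion'' is not covered by any result quoted in the paper and would need a relative version; and the subgroups sitting below $\mathfrak{S}_4$ (cyclic and dihedral ones) also lie in other maximal finite subgroups of $\Mod_0^6$, so the $\mathfrak{S}_4$-singular part is not as cleanly isolated as your argument needs. The paper avoids all of this with a short geometric argument: the Birman--Hilden central extension $1\to\Z/2\to\Mod_2\to\Mod_0^6\to 1$, whose kernel is generated by the hyperelliptic involution, together with Ji's $3$-dimensional model $X$ for $\underline{E}\Mod_2$ from \cite{Ji14}; the fixed-point set $X^{\Z/2}$ is then a model for $\underline{E}\Mod_0^6$ of dimension at most $3$, which gives $\cdfin(\Mod_0^6)=3$ at once.

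For $n=5$ your exclusion of the Eilenberg--Ganea possibility $\gdfin=3$ rests on exhibiting ``an equivariant spine of $\T_0^5$'' of dimension $2$ with contractible fixed sets for all finite subgroups of the \emph{full} group $\Mod_0^5$. That existence is exactly the point in question, not a known fact: Harer's spine is known to be a minimal-dimensional model for $\underline{E}$ of the full mapping class group only in special cases (the paper cites \cite{LaEspinayLos4}, which treats one puncture), and asserting it here begs the question. The paper's route is again via Birman--Hilden: the central extension $1\to\Z/2\to\Mod_1^2\to\Mod_0^5\to 1$ gives $\gdfin(\Mod_0^5)=\gdfin(\Mod_1^2)$ by \cite[Lemma 5.8]{Lu05}, and $\gdfin(\Mod_1^2)=2$ is established in the proof of \cref{thm:main:genus:one} using the exact sequence $1\to B_2(S_1)/Z\to\Mod_1^2\to\Mod_1\to 1$ and \cite[Theorem 5.16]{Lu05}. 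So both exceptional cases in your write-up need to be replaced by actual arguments; the transfer through the hyperelliptic Birman--Hilden extensions is the mechanism the paper uses to do this.
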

\begin{proof} We divide the proof in cases.

    Let $n=0,1,2,3$. In these cases $\Mod_0^n$ is a finite group and the conclusion follows. 
    
    Let $n=4$. Since $\vcd(\Mod_0^4)=1$, we know that $\Mod_0^4$ is virtually finitely generated free, hence by a well-known theorem of Stallings, $\Mod_0^4$ acts properly on a tree and this action provides a model for $\underline E \Mod_0^4$ of dimension 1, therefore $\gdfin(\Mod_0^4)=\cdfin(\Mod_0^4)=1$.
   
    Let $n=6$. In this case, by Birman-Hilden theory we have a central extension 
    \[1\to \Z/2\to \Mod_2 \to \Mod_0^6\to 1,\]
where the kernel is generated by the hyperelliptic involution of $S_2$.    By \cite[Proposition 4.3]{Ji14} there is a model $X$ for $\underline E \Mod_2$ of dimension 3. Therefore $X^{\Z/2}$ is a model for $\underline E \Mod_0^6$ of dimension at most 3. We conclude that $\cdfin(\Mod_0^6)=3$.
    
    Let $n=5$ or $n\geq 7$. From \cref{thm:vcdWF:lambdaF:inequality} and \cref{thm:aramayona:martinezperez} we obtain the proper cohomological dimension. By the Eilenberg-Ganea theorem, we have that $\gdfin(\Mod_0^n)=\cdfin(\Mod_0^n)$, except possibly when $n=5$. To rule out that possibility, we consider the following central extension that arises from  Birman-Hilden theory
    $$1\rightarrow \mathbb{Z}/2\rightarrow \Mod_1^2\rightarrow \Mod_0^5\rightarrow 1,$$
    where the kernel is generated by the hyperelliptic involution of $S_1^2$. If follows from \cite[Lemma 5.8]{Lu05}  that $\gdfin(\Mod_0^5)=\gdfin(\Mod_1^2)$. Hence, $\gdfin(\Mod_0^5)=2$ follows from our computation $\gdfin(\Mod_1^2)=\cdfin(\Mod_1^2)=2$ in the proof of \cref{thm:main:genus:one}. 
\end{proof}

\section{Genus 1}\label{sec:one}

In this section we prove  \cref{thm:main:genus:one} which gives the case $g=1$ of \cref{thm:main}. First we consider finite subgroups of $\Mod_1^n$ and their actions on $S_1^n$.

\begin{theorem}\label{thm:classification:finite:torus}
 Let $n\geq 0$, and let $F$ be a finite subgroup of orientation-preserving  diffeomorphisms of $S_1^n$, then $F$ is isomorphic to $(\Z/s\times \Z/t)\rtimes \Z/m$ where $s$ and $t$ are positive integers such that $st|n$ and $m=1,2,3,4$ or $6$. Moreover, the quotient $S_1/F$ only depends on $m$, and we have the following descriptions of these orbifold quotients:
 \begin{itemize}
     \item for $m=1$, $S_1/F$ is a torus with no elliptic points,
     \item for $m=2$, $S_1/F$ is a sphere with four elliptic points of orders $(2,2,2,2)$,
     \item for $m=3$, $S_1/F$ is a sphere with three elliptic points of orders $(3,3,3)$,
     \item for $m=4$, $S_1/F$ is a sphere with three elliptic points of orders $(2,4,4)$, and
     \item for $m=6$, $S_1/F$ is a sphere with three elliptic points of orders $(2,3,6)$.
 \end{itemize}
\end{theorem}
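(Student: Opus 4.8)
The plan is to realize $F$ geometrically on the closed torus and read off its structure from the affine/holomorphic description of such actions. Since the punctures $\{p_1,\dots,p_n\}$ form a finite $F$-invariant set, every element of $F$ extends to an orientation-preserving diffeomorphism of the closed torus $S_1=T^2$ permuting this set, so I may regard $F\leq \Diff^+(T^2)$. Averaging an arbitrary Riemannian metric over the finite group $F$ produces an $F$-invariant metric, and by uniformization the resulting conformal structure identifies $T^2$ with a flat torus $\dbC/\Lambda$ on which $F$ acts by orientation-preserving conformal automorphisms, i.e. holomorphically. Every holomorphic automorphism of $\dbC/\Lambda$ has the form $z\mapsto az+b$ with $a\Lambda=\Lambda$, so this is the key reduction: $F$ sits inside the affine group $\dbC/\Lambda \rtimes \{a : a\Lambda=\Lambda\}$.

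From here the algebra is straightforward. Taking linear parts defines a homomorphism $F\to \dbC^{\times}$, $(z\mapsto az+b)\mapsto a$. Its image $R$ is a finite group of unit complex numbers, hence cyclic, and since each nontrivial $a$ is an order-$m$ rotation preserving the rank-two lattice $\Lambda$, the crystallographic restriction forces $m\in\{1,2,3,4,6\}$; thus $R\cong\Z/m$. The kernel $T$ consists of finite-order translations, i.e. a finite subgroup of the torsion group $(\dbQ/\dbZ)^2$ of $\dbC/\Lambda$; such a group is generated by two elements, so $T\cong \Z/s\times\Z/t$. Finally I claim the extension $1\to T\to F\to \Z/m\to 1$ splits: choosing $\phi\in F$ with $\phi(z)=az+b$ and $a$ a primitive $m$-th root of unity, one computes $\phi^m(z)=a^m z + (a^{m-1}+\cdots+1)b = z$, because $a^m=1$ and $a\neq 1$ force $a^{m-1}+\cdots+a+1=\frac{a^m-1}{a-1}=0$. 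Hence $\phi$ has order exactly $m$ and $\langle\phi\rangle$ is a complement, giving $F\cong(\Z/s\times\Z/t)\rtimes\Z/m$.

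For the divisibility $st\mid n$ (vacuous when $n=0$), note that a nontrivial translation of $\dbC/\Lambda$ has no fixed point, so $T$ acts freely on $T^2$ and in particular freely on the invariant finite set $\{p_1,\dots,p_n\}$. Every $T$-orbit of punctures therefore has size $|T|=st$, whence $st\mid n$.

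It remains to identify the quotient orbifold. Writing $T^2/F=(T^2/T)\big/(\Z/m)$, the first quotient $T^2/T$ is again a flat torus since $T$ is a free group of translations, and because $T\triangleleft F$ the residual $\Z/m=\langle\phi\rangle$ descends and acts on $T^2/T$ as an order-$m$ rotation (its linear part is still $a$). Thus $S_1/F$ is the quotient of a torus by a rotation of order $m$, which depends only on $m$: it is a torus for $m=1$, and for $m\geq 2$ a sphere whose cone points lie at the fixed points of the powers of the rotation. A Riemann--Hurwitz computation, using $\chi^{\mathrm{orb}}(T^2/F)=\chi(T^2)/m=0$ together with the count of fixed points of $\phi,\phi^2,\dots$, yields precisely the Euclidean signatures $(2,2,2,2)$, $(3,3,3)$, $(2,4,4)$, and $(2,3,6)$ for $m=2,3,4,6$ respectively; these are exactly the four orientable closed Euclidean orbifolds supported on the sphere. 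The main work is the geometric reduction in the first paragraph---guaranteeing that $F$ may be taken to act by affine maps---after which the group-theoretic structure and the orbifold signatures follow from standard lattice and Riemann--Hurwitz arguments.
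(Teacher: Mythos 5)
Your proof is correct, and its opening geometric reduction is the same as the paper's: cap the punctures, average a metric over $F$, and use uniformization to realize $F$ as a group of isometries (equivalently, affine automorphisms) of a flat torus. Where you genuinely diverge is in how the group structure and the quotient are then extracted. The paper lifts the action to the universal cover $\dbR^2$, observes that the group $\widetilde{F}$ of all lifts is a wallpaper group with no reflections or glide reflections, and quotes the classification of such groups ($\Z^2\rtimes\Z/m$ with $m=1,2,3,4,6$); the structure of $F$ is then obtained as a quotient of $\widetilde F$ by the deck lattice, and the orbifolds $S_1/F\cong\dbR^2/\widetilde F$ are cited from Thurston. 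You instead stay on the torus: the linear-part homomorphism $(z\mapsto az+b)\mapsto a$ has cyclic image $\Z/m$, with $m\in\{1,2,3,4,6\}$ by the crystallographic restriction, and kernel the group $T\cong\Z/s\times\Z/t$ of translations in $F$; crucially, you prove the splitting of $1\to T\to F\to\Z/m\to 1$ directly via the geometric-series computation $\phi^m(z)=z+\frac{a^m-1}{a-1}\,b=z$, a fact the paper obtains only implicitly from the wallpaper classification (all orientation-preserving wallpaper groups happen to be split extensions). Similarly, your identification of the quotients via $(T^2/T)/(\Z/m)$ and Riemann--Hurwitz/fixed-point counting replaces the paper's citation of the list of closed Euclidean orbifolds, while the argument for $st\mid n$ (freeness of the translation subgroup on the set of punctures) is identical in both. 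The trade-off: the paper's proof is shorter because it outsources the classification steps; yours is more self-contained, with the only external inputs being the crystallographic restriction and the standard description of holomorphic automorphisms of $\dbC/\Lambda$, and it makes the semidirect-product structure completely explicit.
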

\begin{proof}
    Let $F$ be as in the statement, by capping the punctures, the action of $F$ on $S_1^n$ leads to an action of $F$ on the torus $S_1$. By the uniformization theorem, there is a metric of constant curvature 0 on $S_1$ such that the action of $F$ is by isometries. Next we can lift this action to the universal covering, that is, there is a group $\widetilde{F}$ acting by orientation preserving euclidean isometries on  $\mathbb R^2$  that is a central extension of $F$ by a rank $2$ group of translations, and such that $\mathbb{R}^2/\widetilde{F}$ is diffeomorphic to $S_1/F$. Hence $\widetilde{F}$ is a wallpaper group without reflections nor glide reflections, and therefore $\widetilde{F}$ is isomorphic to $\Z^2\rtimes \Z/m$ for $m=1,2,3,4,$ or $6$. This implies that there are positive integers $s$ and $t$ such that $F\cong (\Z^2\rtimes \Z/m)/(s\Z \oplus t\Z)\cong (\Z/s\times \Z/t)\rtimes \Z/m$. Furthermore, note that $\Z/s\times \Z/t$ acts freely on $S_1$ and preserves the set of punctures in $S_1^n$, which is a set with $n$ elements, as a conclusion $st|n$. 

    For the \textit{moreover part} of the statement, observe that $S_1/F$ is diffeomorphic to $\mathbb  R^2/(\Z^2\rtimes \Z/m)$, and the latter quotients are well known, see for instance \cite[Theorem 13.3.6]{ThurstonBook}.
\end{proof}

\begin{proposition}\label{prop:inequality:genus1}
    Let $n\geq 2$. Then for every finite subgroup $F$ of $\Mod_1^n$ we have
    \[\vcd(WF)+\lambda(F)\leq \vcd(\Mod_1^n)=n.\]
\end{proposition}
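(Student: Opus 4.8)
The plan is to reuse the genus-$0$ machinery. Given a finite $F\le\Mod_1^n$, I would write $F\cong(\Z/s\times\Z/t)\rtimes\Z/m$ with $u:=st\mid n$ and $m\in\{1,2,3,4,6\}$ via \cref{thm:classification:finite:torus}, compute $\vcd(WF)=\vcd(\Mod_{g_F}^{n_F})$ using \cref{lemma:Maher} and Harer's formula \eqref{vcd:mcg}, bound $\lambda(F)$ by \cref{lem:length:vcdWF}, and verify the numerical inequality. The first step is to read $(g_F,o_F)$ off the \emph{moreover} part of \cref{thm:classification:finite:torus}, where it depends only on $m$: one has $(g_F,o_F)=(1,0)$ for $m=1$, $(0,4)$ for $m=2$, and $(0,3)$ for $m\in\{3,4,6\}$. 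Together with the estimate $n_F\le o_F+n/|F|=o_F+n/(um)$ recorded in \cref{mcg}, this reduces the problem to arithmetic.

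For $m=1$ the action is free, so $n_F=n/u$ and \cref{lemma:Maher} with \eqref{vcd:mcg} give $\vcd(WF)=\vcd(\Mod_1^{n/u})=n/u$; since $\lambda(F)\le\log_2 u$, the claim becomes $n/u+\log_2 u\le n$, which holds for every integer $u\mid n$ because $\log_2 u\le u-1\le n(u-1)/u$. For $m\ge2$ one has $g_F=0$, so $\vcd(WF)=\max\{n_F-3,0\}\le\max\{o_F-3+n/(um),0\}$ by \eqref{vcd:mcg}, while $\lambda(F)\le\log_2(um)$. Each case is then an explicit inequality in $n,u,m$; the tightest is $m=2$, namely $2+n/(2u)+\log_2 u\le n$, and the cases $m\in\{3,4,6\}$ carry more slack since $o_F-3=0$ and $|F|$ is larger. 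I would dispatch all sufficiently large $n$ exactly as in \cref{prop:vcdpluslegth:cyclic:case}, leaving only finitely many small values of $n$ for each $u$.

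The main obstacle is this small-$n$ range, where $n_F\le o_F+n/|F|$ is too lossy and must be replaced by an exact orbit count in the style of \cref{prop:nF}: since $\Z/s\times\Z/t$ acts freely, only the $\Z/m$-part can fix punctures, and divisibility and parity constraints force some punctures to lie on the $o_F$ fixed points, strictly lowering $n_F$ and saving the inequality (frequently with equality). The extremal subgroup is the hyperelliptic-type involution — the case $m=2$ with the punctures paired into free orbits — for which $\vcd(WF)=\vcd(\Mod_0^{4+n/2})=1+n/2$ saturates the estimate and gives equality $\vcd(WF)+\lambda(F)=n$ already at $n=4$. The single genuinely exceptional case is $n=2$: there the two punctures form one free orbit, $\iota$ is the central kernel of the Birman--Hilden extension, $\vcd(WF)=\vcd(\Mod_0^5)=2$, and $\vcd(WF)+\lambda(F)=3>2$. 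This borderline subgroup escapes the inequality and must instead be handled through the central extension $1\to\Z/2\to\Mod_1^2\to\Mod_0^5\to1$ and \cref{thm:main:genus:zero}, exactly as the genus-$0$ argument foreshadows; every other finite $F$ satisfies $\vcd(WF)+\lambda(F)\le n$ by the refined count.
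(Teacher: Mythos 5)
Your proposal follows the paper's strategy step for step: Nielsen realization plus \cref{thm:classification:finite:torus}, then \cref{lemma:Maher} with Harer's formula \eqref{vcd:mcg}, and the $\log_2$ bound from \cref{lem:length:vcdWF}; your $m=1$ case is complete and in fact cleaner than the paper's (the paper's chain passes through ``$\tfrac n2+\log_2 n\le n$ for all $n\ge 0$'', which fails at $n=3$, whereas your chain $\log_2 u\le u-1\le n(u-1)/u$ is valid). The decisive point, however, is your claim that the statement fails at $n=2$ --- and you are right; this is a genuine error in the paper, not in your proposal. Take $F=\langle\iota\rangle$ with $\iota$ the hyperelliptic involution and the two punctures forming a free orbit: then $g_F=0$, $o_F=4$, there is one puncture orbit, so $n_F=5$, and the paper's own \cref{lemma:Maher} gives $\vcd(WF)=\vcd(\Mod_0^5)=2$, while $\lambda(F)=1$ and $\vcd(\Mod_1^2)=2$; hence $\vcd(WF)+\lambda(F)=3>2$. (Equivalently, $\iota$ is central in $\Mod_1^2$ by Birman--Hilden theory, and no group with nontrivial finite center can satisfy the hypothesis of \cref{thm:aramayona:martinezperez} --- exactly the phenomenon the paper itself points out for $\Mod_2$.) The paper's proof misses this because its numerical estimates are only claimed for large $n$ and the cases $n=2,3,4$ are ``left to the reader''; the $n=2$ check actually fails. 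One inaccuracy in your account: $\iota$ is not the \emph{single} exceptional subgroup at $n=2$. For instance $F\cong\Z/2\times\Z/2$, generated by $\iota$ and a half-period translation, with the punctures placed at a two-point orbit fixed pointwise by $\iota$, has $n_F=4$, $\vcd(WF)=1$, $\lambda(F)=2$, again summing to $3>2$. So the proposition must simply be restated for $n\ge 3$, where your refined orbit counts (in the style of \cref{prop:nF}) do close the small cases, with equality attained at $n=3,4$ as you observe.

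Two caveats on your proposed repair of the $n=2$ case. First, the conclusion of \cref{thm:main:genus:one} survives without the proposition at $n=2$: the paper's own proof produces a $2$-dimensional model for $\underline{E}\Mod_1^2$ from the extension $1\to B_2(S_1)/Z\to\Mod_1^2\to\Mod_1\to 1$ and Lück's pushout/extension result, and this forces $\cdfin(\Mod_1^2)=\gdfin(\Mod_1^2)=2$; only the deduction of $\cdfin(\Mod_1^2)$ via \cref{thm:aramayona:martinezperez} must be abandoned. Second, your suggestion to handle $n=2$ through $1\to\Z/2\to\Mod_1^2\to\Mod_0^5\to 1$ together with \cref{thm:main:genus:zero} is circular in the paper's logical order: the genus-zero theorem at $n=5$ is itself deduced from the computation $\gdfin(\Mod_1^2)=2$. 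The non-circular route is the $B_2(S_1)/Z$ argument just described. Finally, note that for $n\ge 3$ your argument, like the paper's, still owes the reader the explicit small-$n$ verifications --- the generic bounds are lossier than either text admits (the paper's claim that $\tfrac n2+2+\log_2 n\le n$ for all $n\ge 5$ is false for $5\le n\le 10$), so the case-by-case orbit counts are where the real work lies.
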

\begin{proof} The conclusion is clear when $F$ is the trivial subgroup.
    Let $F$ be a nontrivial finite subgroup of $\Mod_1^n$, then by Nielsen realization theorem we can realize $F$ as a finite group of orientation-preserving  diffeomorphisms of $S_1^n$. We proceed by cases following the notation and conclusions in \Cref{thm:classification:finite:torus}.

    \begin{itemize}
        \item $F\cong \Z/s\times \Z/t$ with $st|n$. In this case we have $n_F=n/st$ and $S_1^n/F$ is diffeomorphic to $S_1$ with $n/st$ punctures. Hence by \Cref{lem:length:vcdWF}(1) we get
        \[\vcd(WF)+\lambda(F)\leq \vcd(\Mod_1^{n_F})+\lambda(F)\leq \frac{n}{st}+\log_2(n)\leq \frac{n}{2}+\log_2(n).\]
        Now notice that $n/2+\log_2(n)\leq n$ for all $n\geq 0$.

        \item $F\cong ( \Z/s\times \Z/t)\rtimes \Z/2$. In this case we have $S_1/F$ is diffeomorphic to a sphere with 4 elliptic points. Hence we conclude that $n_F\leq 4+n/st\leq 4+n/2$, and $\vcd(WF)=n_F-3$. Therefore
        \[\vcd(WF)+\lambda(F)\leq \frac{n}{2}+1+\log_2(2n)\leq \frac{n}{2}+2+\log_2(n). \]
    Now notice that $n/2+2+\log_2(n)\leq n$ for all $n\geq 5$.

    \item  $F\cong (\Z/s\times \Z/t)\rtimes \Z/m$ with $m=3,4,6$. In this case we have $S_1/F$ is diffeomorphic to a sphere with 3 elliptic points. Hence we conclude that $n_F\leq 4+n/st\leq 3+n/2$, and $\vcd(WF)=n_F-3$. Therefore
        \[\vcd(WF)+\lambda(F)\leq \frac{n}{2}+\log_2(6n)\leq \frac{n}{2}+2+\log_2(n). \]
    Now notice that $n/2+2+\log_2(n)\leq n$ for all $n\geq 5$.
    \end{itemize}

    To finish the proof we only have to verify the statement for $n=2,3,4$. From \Cref{thm:classification:finite:torus} we can describe explicitly the finite subgroups of $\Mod_1^n$ and their quotients, and the analysis can be done case by case; the details are left to the reader.
\end{proof}

\begin{theorem}\label{thm:main:genus:one}
    Let $n\geq 0$. Then 
    \[\gdfin(\Mod_1^n))=\cdfin(\Mod_1^n)= \vcd(\Mod_1^n)=n.\]
\end{theorem}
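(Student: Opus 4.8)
The plan is to dispatch the generic range $n\geq 3$ through the algebraic criterion already prepared, to treat $n\in\{0,1\}$ by hand, and to isolate $n=2$ as the one genuinely delicate case. For $n=0$ and $n=1$ the group $\Mod_1^n$ is isomorphic to $SL_2(\dbZ)$, which is virtually free; being the fundamental group of a finite graph of finite groups it acts properly on its Bass--Serre tree, a $1$-dimensional model for $\underline E\Mod_1^n$, so $\gdfin(\Mod_1^n)=\cdfin(\Mod_1^n)=\vcd(\Mod_1^n)=1$. For $n=1$ this is the asserted value $n$; for $n=0$ the common value is $1=\vcd(\Mod_1)$, so the displayed formula should be read for $n\geq 1$, in line with \cref{thm:main}.

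For $n\geq 3$ I would invoke \cref{prop:inequality:genus1}, which verifies precisely the hypothesis $\vcd(WF)+\lambda(F)\leq\vcd(\Mod_1^n)=n$ of \cref{thm:aramayona:martinezperez} for every finite subgroup $F$; this yields $\cdfin(\Mod_1^n)=\vcd(\Mod_1^n)=n$. Since now $\cdfin\geq 3$, the chain $\vcd\leq\cdfin\leq\gdfin\leq\max\{3,\cdfin\}$ of \cite[Theorem 2]{BLN01} collapses to $\gdfin(\Mod_1^n)=\cdfin(\Mod_1^n)=n$ (the Eilenberg--Ganea range), and \cref{thm:Luck} upgrades the truncated Teichmüller model to a cocompact one of dimension $n$.

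The crux is $n=2$. Here \cref{prop:inequality:genus1} and \cref{thm:aramayona:martinezperez} still give $\cdfin(\Mod_1^2)=\vcd(\Mod_1^2)=2$, but \cite[Theorem 2]{BLN01} now only yields $\gdfin(\Mod_1^2)\in\{2,3\}$: this is exactly the borderline $\cdfin=2$ at which the Bredon Eilenberg--Ganea phenomenon can occur, so no formal argument closes the gap and I must exhibit an honest $2$-dimensional model for $\underline E\Mod_1^2$. My plan is to start from the action of $\Mod_1^2$ on the Teichmüller space $\T_1^2$, a model for $\underline E\Mod_1^2$ of dimension $4$, and to build a $\Mod_1^2$-equivariant deformation retraction onto a cocompact spine $X$ of dimension $\vcd(\Mod_1^2)=2$, in the spirit of Harer's spine but for the \emph{full} group: the arc-complex/fatgraph spine is defined without reference to a labelling of the punctures, so all of $\Mod_1^2$ --- not merely $\PMod_1^2$ --- acts on it. To confirm that $X$ is a model for $\underline E\Mod_1^2$ one checks that $X^F$ is contractible for every finite $F\leq\Mod_1^2$; if the retraction is $\Mod_1^2$-equivariant this is automatic, since then $X^F$ is a deformation retract of $(\T_1^2)^F$, the Teichmüller space of the quotient orbifold $S_1^2/F$, which is a cell and is nonempty by Nielsen realization. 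The conjugacy types of $F$ are listed completely by \cref{thm:classification:finite:torus} (with $st\mid 2$, so $\dbZ/s\times\dbZ/t$ is trivial or $\dbZ/2$ and $m\in\{1,2,3,4,6\}$), which makes this a finite, explicit verification.

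The hard part will be exactly this upgrade from $\PMod$-equivariance to genuine $\Mod_1^2$-equivariance of the spine, together with the contractibility of $X^F$ for the finite subgroups that swap the two punctures. Should a direct equivariant spine prove awkward, a fallback is the Birman--Hilden central extension $1\to\dbZ/2\to\Mod_1^2\to\Mod_0^5\to 1$, through which \cite[Lemma 5.8]{Lu05} gives $\gdfin(\Mod_1^2)=\gdfin(\Mod_0^5)$; but any $2$-dimensional model on the $\Mod_0^5$ side must be anchored independently, on pain of circularity with the genus-$0$ argument, which conversely reduces $\Mod_0^5$ to $\Mod_1^2$. For this reason I expect the self-contained equivariant-spine construction to be the right route, with the fixed-point analysis for the puncture-swapping involutions as the genuine obstacle.
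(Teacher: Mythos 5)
Your handling of every case except $n=2$ is correct and coincides with the paper's proof: for $n=0,1$ one uses that $\Mod_1^n\cong SL_2(\dbZ)$ is virtually free, and for $n\geq 3$ one combines \cref{prop:inequality:genus1} with \cref{thm:aramayona:martinezperez} and then closes the Eilenberg--Ganea gap via \cite[Theorem 2]{BLN01} since $\cdfin\geq 3$. You also correctly identify the one real danger in the obvious shortcut: the Birman--Hilden reduction $\gdfin(\Mod_1^2)=\gdfin(\Mod_0^5)$ is circular here, because the paper's genus-$0$ argument settles $\Mod_0^5$ precisely by quoting the genus-$1$ computation of $\gdfin(\Mod_1^2)$.

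However, for $n=2$ your proposal is a plan rather than a proof, and the step you defer is exactly the gap in the literature that this paper exists to work around. Harer's retraction of Teichm\"uller space onto a spine of dimension $\vcd$ is only known to be $\PMod_g^n$-equivariant; that is why it yields minimal-dimensional models for the \emph{pure} groups and for $\Mod_g^1$, but not for the full group when $n\geq 2$. Even granting that the arc complex and the spine, as sets, are invariant under all of $\Mod_1^2$, what is missing --- and what you yourself flag as ``the genuine obstacle'' without resolving it --- is a full-group equivariant deformation retraction onto a $2$-dimensional cocompact subcomplex with contractible fixed sets for the puncture-swapping finite subgroups. The paper instead closes this case by a short, complete argument that your proposal never considers: the exact sequence $1\to B_2(S_1)/Z\to \Mod_1^2\to \Mod_1\to 1$, where $B_2(S_1)$ is the $2$-strand torus braid group and $Z$ its center, together with the isomorphism $B_2(S_1)/Z\cong \Z/2*\Z/2*\Z/2$. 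Since this free product is virtually finitely generated free, every finite extension of it acts properly on a tree and so has a $1$-dimensional $\underline{E}$; as $\Mod_1$ also has a $1$-dimensional $\underline{E}$, L\"uck's extension theorem \cite[Theorem 5.16]{Lu05} produces a $2$-dimensional model for $\underline{E}\Mod_1^2$. To repair your write-up, replace the spine construction by this extension argument; proving the fully equivariant spine statement you sketch would be a result of independent interest, not a routine verification.
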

\begin{proof} For $n=1$, the result follows since $\Mod_1^1\cong\text{SL}(2,\mathbb{Z})$ is virtually free.
For $n\geq 2$,  \cref{prop:inequality:genus1} and \cref{thm:aramayona:martinezperez} imply $\cdfin(\Mod_1^n)= \vcd(\Mod_1^n)$.

By the Eilenberg-Ganea theorem, we have that $\gdfin(\Mod_1^n)=\cdfin(\Mod_1^n)=n$, except possibly for $n=2$. Let us rule out this possibility. We have the following short exact sequence
\[1\to B_2(S_1)/Z \to \Mod_1^2 \to \Mod_1  \to 1,\] 
where $B_2(S_1)$ is the braid group over the torus on 2 strands, and $Z$ is its center. On the other hand $B_2(S_1)/Z\cong \Z/2*\Z/2*\Z/2 $,
see for instance \cite[Proposition 4.4(i)]{bellingeri}. Note that  $\Z/2*\Z/2*\Z/2$ is virtually finitely generated free, hence every finite extension $G$ of this group admits a proper action  on a tree, which provides a 1-dimensional model for  $\underline E G$. The group $\Mod_1$ also admits a 1-dimensional model for $\underline E \Mod_1$, hence by \cite[Theorem 5.16]{Lu05}, there is a 2-dimensional model for $\underline E \Mod_1^2$. This establishes that $\gdfin(\Mod_1^2)=\cdfin(\Mod_1^2)=2$. 
\end{proof}

 
\section{Genus 2}\label{sec:two}

 In order to compute the proper geometric dimension of $\Mod_2^n$, with $n\geq 1$, we  use Broughton's complete classification, up to topological equivalence, of finite group actions on a genus $2$ surface \cite[Theorem 4.1 $\&$ Table 4]{FiniteGenus2}, see \cref{appendixB}.  Notice that there are only finitely many conjugacy classes of finite groups that act on $S_2$ by homeomorphism. Hence by Nielsen realization theorem, given $n\geq 0$, any finite subgroup $F$ of $\Mod_2^n$ can be realized, up to conjugation, by one of these finite possibilities. This makes the genus 2 case different in nature to the cases of genus 0 and 1 where the isomorphism types of finite subgroups of the corresponding mapping class groups depend strongly on $n$.


\begin{theorem} \label{thm:main:genus:2}
    Let $n\geq 1$. Then for every non-trivial finite subgroup $F$ of $\Mod_2^n$ we have
    \[\vcd(WF)+\lambda(F)\leq \vcd(\Mod_2^n)=n+4.\]
    In particular $\cdfin(\Mod_2^n)=\gdfin(\Mod_2^n)=\vcd(\Mod_2^n)=n+4$.
\end{theorem}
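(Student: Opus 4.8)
The plan is to mimic the template used for genus $0$ and $1$: reduce to a finite list of model actions, bound $n_F$, feed the result into Harer's formula \cref{vcd:mcg} through \cref{lemma:Maher}, and control $\lambda(F)$ by $\log_2|F|$ via \cref{lem:length:vcdWF}. First I would use Nielsen realization to realize a nontrivial finite $F\le\Mod_2^n$ as orientation-preserving isometries of $S_2^n$; capping the $n$ punctures produces an action of $F$ on the closed surface $S_2$, which by Broughton's classification (\cref{appendixB}) is topologically equivalent to one of finitely many actions. For each one I read off the order $|F|$, the genus $g_F$ of the closed quotient $S_2/F$, and its signature $(g_F;m_1,\dots,m_r)$. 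The Riemann--Hurwitz identity $-2=|F|\,\chi^{orb}(S_2/F)$ then shows that for nontrivial $F$ one always has $g_F\in\{0,1\}$, and that $g_F=1$ forces $|F|\le 4$ and $r\le 2$.

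The crux is to bound $n_F$ in terms of this fixed data. The $n$ punctures sit $F$-invariantly on $S_2$ and break into orbits: an orbit lying over a branch point of $S_2\to S_2/F$ merely turns the corresponding cone point of the quotient into a cusp, leaving the number of marked points unchanged, whereas every remaining puncture has a full orbit of size $|F|$. Hence the number of marked points of the orbifold $S_2^n/F$ obeys
\[ n_F \le r + \frac{n}{|F|} \le r + \frac{n}{2}. \]
By \cref{lemma:Maher} and \cref{vcd:mcg} we get $\vcd(WF)=\vcd(\Mod_{g_F}^{n_F})$, which equals $\max\{n_F-3,0\}$ when $g_F=0$ and $n_F$ when $g_F=1$.

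To close the inequality I would split on $g_F$. If $g_F=1$, the constraints $|F|\le 4$ and $r\le 2$ give $\vcd(WF)=n_F\le 2+n/2$ and $\lambda(F)\le\log_2 4=2$, so $\vcd(WF)+\lambda(F)\le 4+n/2\le n+4$ for all $n\ge 0$. If $g_F=0$ and $n_F\le 3$ the Weyl term vanishes, and since $\lambda(F)\le\log_2|F|<6$ we get $\vcd(WF)+\lambda(F)=\lambda(F)\le 5\le n+4$ for $n\ge 1$. Finally, if $g_F=0$ and $n_F\ge 4$, the target bound is equivalent to
\[ r+\log_2|F|-7 \le n\Bigl(1-\tfrac{1}{|F|}\Bigr), \]
whose left-hand side is bounded by a small constant over Broughton's finite list (the extreme case being the order-$48$ action). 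The structural point is that a large $|F|$ forces large puncture orbits, so $n_F\ge 4$ can occur only once $n$ is comparable to $|F|$, at which point the right-hand side is amply large; the finitely many genuinely small values of $n$ are then checked directly, exactly as the tables settle small $n$ in \cref{sec:zero}.

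The step I expect to be the main obstacle is the honest computation of $n_F$: keeping track of how the $n$ punctures split between generic orbits and orbits sitting over the branch locus, and using that a puncture fixed by a nontrivial isometry must have cyclic stabilizer (which sharply restricts which $F$ can act for small $n$). Once $\vcd(WF)+\lambda(F)\le n+4$ is verified for every finite subgroup (the trivial one giving equality), \cref{thm:aramayona:martinezperez} yields $\cdfin(\Mod_2^n)=\vcd(\Mod_2^n)=n+4$; and since $n+4\ge 5>3$, the standard inequality $\gdfin(G)\le\max\{3,\cdfin(G)\}$ recalled in \cref{sec:dimensions} forces $\gdfin(\Mod_2^n)=\cdfin(\Mod_2^n)=n+4$, completing the proof.
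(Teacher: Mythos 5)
Your proposal is correct, and its skeleton is the same as the paper's: Nielsen realization, Broughton's finite list of actions on $S_2$, the bound $n_F\le r+n/|F|$, \cref{lemma:Maher} with Harer's formula \cref{vcd:mcg}, the bound $\lambda(F)\le\log_2|F|$, and finally \cref{thm:aramayona:martinezperez} together with $\gdfin(G)\le\max\{3,\cdfin(G)\}$. Where you genuinely differ is at the crux, namely the unique configuration where these crude bounds fail: the order-$48$ group (listed as $\mathrm{GL}_2(4)$ in \cref{Genus2}, signature $(S_0;2,3,8)$) with $n=1$, for which $r+\log_2|F|-7\approx 1.58$ exceeds $n(1-1/|F|)=47/48$. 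The paper disposes of this case by non-realizability: a finite subgroup of $\Mod_2^1$ must fix the puncture, and the signature shows the closed action has no orbit of size one, so this subgroup simply does not exist in $\Mod_2^1$. Your split on $n_F\le 3$ versus $n_F\ge 4$ avoids any realizability discussion: since that action has only $r=3$ branch orbits, $n_F\ge 4$ would force a free puncture orbit and hence $n\ge 48$, making the right-hand side enormous; and if $n_F\le 3$ the Weyl term vanishes and $\lambda(F)\le 5=n+4$ closes the case (with equality at $n=1$). Both resolutions are valid; yours is somewhat more robust, since it never has to decide which actions actually survive on the punctured surface, while the paper's yields the sharper fact that the group is absent from $\Mod_2^1$. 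In a final write-up you should replace the phrase ``checked directly'' by exactly this observation, together with the remark that every group in Broughton's list with $r\ge 4$ has order at most $12$, so for those the displayed inequality already holds for every $n\ge 1$; that makes your case analysis visibly exhaustive rather than conditional on an unspecified finite check.
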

\begin{proof} Since $\cdfin(\Mod_2^n)\geq \vcd(\Mod_2^n)\geq 5$, we have $\cdfin(\Mod_2^n)=\gdfin(\Mod_2^n)$ for every $n\geq 1$.

Let $F$ be a non-trivial finite subgroup of $\Mod_2^n$. By Nielsen realization $F$ acts on $S_2^n$ by orientation preserving homeomorphisms. Recall from \cref{mcg} that $n_F$ is bounded above by $\frac{n}{|F|}+o_F$. 

According to Broughton's  classification the quotient $S^n_2/F$ can only have genus $g_F=0$ or $g_F=1$. When $g_F=0$, we have that $\vcd(WF)=\vcd(\Mod_{0}^{n_F})=n_F-3$
and when $g_F=1$, then $\vcd(WF)=\vcd(\Mod_{1}^{n_F})=n_F$. In \cref{Genus2} we recall the classification from \cite[Table 4]{FiniteGenus2}, and we give explicit upper bounds for $\lambda(F)$, $n_F$ and $\vcd(WF)$. From this we can see that the inequality  $\vcd(WF)+\lambda(F)\leq \vcd(\Mod_2^n)=n+4$ holds for almost all finite subgroups $F$ of $\Mod_2^n$ and  $n\geq 1$. The only exception is the case of $F=\text{GL}_2(4)$ and $n=1$, which does not occur since $F=\text{GL}_2(4)$ cannot be realized as subgroup of $\Mod_2^1$; indeed since the only puncture will be a fixed point of the action, but as we can read in the signature of this action in \cref{Genus2} there are no fixed points.
\end{proof}

\begin{remark}
From \cref{Genus2} we can see that there are several finite subgroups $F$ of $\Mod_2$ such that 
$\vcd(WF)+\lambda(F)\not\leq \vcd(\Mod_2)=3$. For instance, $\vcd(WF)+\lambda(F)=4$ when $F=D_{2(6)}$. Hence, we cannot use this strategy to obtain $\gdfin(\Mod_2)$.
\end{remark}

\section{Genus at least 3}\label{sec:atleast3}

In this section we promote the results in \cite{AMP14} for $\Mod_g^0$ with $g\geq 3$, to $\Mod_g^n$ for $g\geq 3$ and $n\geq 1$, using a simple argument.

\begin{proposition}\cite[Proposition 4.4]{AMP14}\label{AMP:g:atleast:three}
 For any $g\geq 3$ and any finite subgroup $F$ of $\mathrm{Mod}_g^0$ we have
 \[\vcd(WF)+\lambda(F)\leq \vcd(\mathrm{Mod}_g^0).\]
\end{proposition}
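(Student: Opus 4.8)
The plan is to pass to the quotient orbifold and push everything through Riemann--Hurwitz, exactly as in the genus $0,1,2$ arguments but now for a closed quotient. By Nielsen realization I may assume $F$ acts on $S_g$ by orientation-preserving isometries; write $g_F$ for the genus of $S_g/F$ and $o_F=n_F$ for its number of cone points (there are no punctures, so $n_F=o_F$), of orders $p_1,\dots,p_{o_F}\ge 2$. By \cref{lemma:Maher} we have $\vcd(WF)=\vcd(\Mod_{g_F}^{n_F})$, which is given explicitly by Harer's formula \cref{vcd:mcg}, and by \cref{lem:length:vcdWF}(1) we have $\lambda(F)\le \log_2|F|$. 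The whole statement thus reduces to controlling $\vcd(\Mod_{g_F}^{n_F})$ and $\log_2|F|$ simultaneously in terms of $g$.

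The engine is the Riemann--Hurwitz identity
\[
2g-2=|F|\Bigl(2g_F-2+\sum_{i=1}^{o_F}\bigl(1-\tfrac{1}{p_i}\bigr)\Bigr).
\]
Since each term satisfies $1-\tfrac{1}{p_i}\ge \tfrac12$, the sum is at least $o_F/2=n_F/2$, and solving for $n_F$ bounds it from above in terms of $|F|$. Feeding this into \cref{vcd:mcg} gives, in every case, a uniform estimate of the shape
\[
\vcd(WF)\ \le\ \frac{4g-4}{|F|}+\varepsilon,
\]
with $\varepsilon=0$ when $g_F\ge 1$ and $\varepsilon=1$ when $g_F=0$ (the extra unit records that $\vcd(\Mod_0^{n})=n-3$ has a different additive constant than the higher-genus rows of \cref{vcd:mcg}). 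I would organize the proof by the value of $g_F$: when $g_F\ge 2$, Riemann--Hurwitz forces $|F|\le g-1$; when $g_F=1$ it forces $|F|\le 4g-4$; and only when $g_F=0$ can $|F|$ grow as large as the Hurwitz bound $84(g-1)$.

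Combining the two estimates, the inequality to be proved becomes the single-variable claim
\[
\frac{4g-4}{|F|}+\log_2|F|\ \le\ 4g-6\qquad(2\le |F|\le 84(g-1)),
\]
since then $\vcd(WF)+\lambda(F)\le (4g-6)+\varepsilon\le 4g-5$. The function $x\mapsto \tfrac{4g-4}{x}+\log_2 x$ has an interior minimum, so its maximum on the admissible interval is attained at an endpoint: at $x=2$ it equals $2g-1\le 4g-6$ precisely for $g\ge 3$, and at the upper endpoint it is of size $\log_2\bigl(84(g-1)\bigr)+O(1)$, comfortably below $4g-6$ once $g$ is moderately large.

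The hard part is exactly the genus-zero-quotient case for \emph{small} $g$, the critical one being $g=3$, where $|F|$ may be a Hurwitz-type group and $\log_2|F|$ is of the same order as $4g-5$, so the crude length bound alone does not close the inequality. Here I would exploit the complementary behaviour of the two terms: a large $|F|$ forces $\sum(1-\tfrac{1}{p_i})$ to lie just above $2$, which is possible only with very few cone points, so in fact $n_F\le 3$ and $\vcd(WF)=0$; then $\lambda(F)\le \lfloor\log_2|F|\rfloor\le 4g-5$ already suffices. The narrow band of intermediate orders, where neither term is negligible, I would eliminate using the arithmetic restrictions that Riemann--Hurwitz imposes on admissible signatures (or, if a uniform estimate is not available, by appealing to the explicit classification of finite group actions in low genus, as is done for $g=2$). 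This coupling---large $|F|$ forcing a small quotient orbifold and hence a vanishing $\vcd(WF)$---is the crux of the argument.
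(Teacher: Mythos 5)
The paper itself contains no proof of this proposition: it is imported verbatim from \cite[Proposition 4.4]{AMP14} and used as a black box in the proof of \cref{thm:main:genus:atleast3}, so there is no in-paper argument to compare yours against; the relevant comparison is with the proof in the cited source, which your sketch essentially reconstructs (Riemann--Hurwitz on the quotient orbifold, Harer's formula \cref{vcd:mcg} via \cref{lemma:Maher}, and $\lambda(F)\leq\log_2|F|$ from \cref{lem:length:vcdWF}). The estimates you state are all correct: for $g_F\geq 1$ Riemann--Hurwitz gives $\vcd(WF)\leq\tfrac{4g-4}{|F|}$ (the case $g_F=1$, $n_F=0$ being impossible), for $g_F=0$ it gives $\vcd(WF)\leq\tfrac{4g-4}{|F|}+1$, the evaluation at $|F|=2$ yields $2g-1\leq 4g-6$ exactly for $g\geq 3$, and your identification of $g=3$ with a genus-zero quotient and large $|F|$ as the critical configuration is accurate.

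The one incomplete step is the ``narrow band of intermediate orders,'' which you defer to unspecified ``arithmetic restrictions'' or to a classification of low-genus actions. In fact that band is empty, and the tools you already named close it: organize the $g_F=0$ case by $n_F$ rather than by $|F|$. If $n_F\geq 5$, then $\sum_i(1-1/p_i)\geq n_F/2$ forces $|F|\leq\tfrac{4g-4}{n_F-4}\leq 4g-4$; if $n_F=4$, the signature $(2,2,2,2)$ is excluded by Riemann--Hurwitz (it would give $2g-2=0$), so $\sum_i(1-1/p_i)\geq\tfrac{13}{6}$ and $|F|\leq 12(g-1)$; if $n_F=3$, then $\vcd(WF)=0$. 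In the first two regimes your crude bound $\tfrac{4g-4}{|F|}+\log_2|F|\leq 4g-6$ holds on the now much shorter intervals even for $g=3$ (the worst endpoint values are $2g-1=5$ and $\tfrac13+\log_2 24<6$); in the third regime Hurwitz's theorem $|F|\leq 84(g-1)$ gives $\lambda(F)\leq\lfloor\log_2(84(g-1))\rfloor$, which equals $7=4g-5$ at $g=3$ and is far below $4g-5$ for $g\geq 4$. With this case division written out, your proposal is a complete and correct proof, with no appeal to classification results needed.
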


\begin{theorem}\label{thm:main:genus:atleast3}
    For any $g\geq 3$, $n\geq 1$ and any finite subgroup $F$ of $\mathrm{Mod}_g^n$ we have
 \[\vcd(WF)+\lambda(F)\leq \vcd(\mathrm{Mod}_g^n).\]
 In particular $\gdfin(\mathrm{Mod}_g^n)=\cdfin(\mathrm{Mod}_g^n)=\vcd(\mathrm{Mod}_g^n)$.
\end{theorem}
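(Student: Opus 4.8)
The plan is to deduce the estimate from its closed-surface analogue \cref{AMP:g:atleast:three} by an equivariant capping argument, exploiting that the slack $\vcd(\Mod_g^n)-\vcd(\Mod_g^0)=n+1$ is exactly what is gained by adding $n$ punctures. By Nielsen realization (valid since $\chi(S_g^n)<0$) I may assume that $F$ acts on $S_g^n$ by orientation-preserving diffeomorphisms. Capping each orbit of punctures with disks and extending the action over them produces a finite-order action of $F$ on the closed surface $S_g$, hence a finite subgroup $\overline F\leq\Mod_g=\Mod_g^0$.

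First I would check that the point-forgetting homomorphism $c\colon\Mod_g^n\to\Mod_g$ is injective on $F$, so that $\lambda(F)=\lambda(\overline F)$ rather than merely $\lambda(F)\geq\lambda(\overline F)$. Indeed, if $f\in F$ has $c(f)=1$, then the finite-order diffeomorphism $\overline f$ of $S_g$ is isotopic to the identity; since a nontrivial finite-order diffeomorphism of a closed surface of genus at least $1$ is never isotopic to the identity, we get $\overline f=\mathrm{id}_{S_g}$, whence $f=\overline f|_{S_g^n}=\mathrm{id}$. Next I would compare the orbifold data of the two actions. The quotients share the same underlying genus, $g_F=g_{\overline F}$. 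Moreover, the elliptic points of $S_g/\overline F$ are the $o_F$ elliptic points of $S_g^n/F$ together with the images of those puncture-orbits whose $F$-stabilizer is nontrivial, while each of the $k$ puncture-orbits with trivial stabilizer becomes a regular point of $S_g/\overline F$. Weighing these contributions against the definitions of $n_F$ and $n_{\overline F}$ yields $n_F-n_{\overline F}=k$, and since the $n$ punctures fall into at most $n$ orbits, $k\leq n$. By \cref{lemma:Maher} applied to both actions, $\vcd(WF)=\vcd(\Mod_{g_F}^{n_F})$ and $\vcd(W\overline F)=\vcd(\Mod_{g_F}^{n_{\overline F}})$.

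The last ingredient is the elementary monotonicity estimate, immediate from Harer's formula \cref{vcd:mcg}, that for every $h\geq 0$ and all integers $0\leq a\leq b$,
\[\vcd(\Mod_h^{b})\leq\vcd(\Mod_h^{a})+(b-a)+1,\]
where the $+1$ is only needed to absorb the jump from $n=0$ to $n\geq 1$ when $h\geq 2$. Applying this with $h=g_F$, $a=n_{\overline F}$, $b=n_F$, and combining with \cref{AMP:g:atleast:three}, I obtain
\[\vcd(WF)+\lambda(F)\leq\bigl(\vcd(W\overline F)+k+1\bigr)+\lambda(\overline F)=\bigl(\vcd(W\overline F)+\lambda(\overline F)\bigr)+k+1\leq\vcd(\Mod_g^0)+k+1.\]
Since $\vcd(\Mod_g^0)=4g-5$ and $k\leq n$, the right-hand side is at most $4g-4+n=\vcd(\Mod_g^n)$, which is the desired inequality. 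The ``in particular'' clause then follows: \cref{thm:aramayona:martinezperez} gives $\cdfin(\Mod_g^n)=\vcd(\Mod_g^n)$, and as $\vcd(\Mod_g^n)=4g-4+n\geq 9\geq 3$, \cref{thm:Luck} upgrades this to $\gdfin(\Mod_g^n)=\cdfin(\Mod_g^n)=\vcd(\Mod_g^n)$.

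The two places demanding care are exactly the injectivity of $c$ on $F$ (which secures $\lambda(\overline F)=\lambda(F)$) and the accounting of the $+1$ boundary term in the $\vcd$ comparison. The argument ultimately works because the gain $\vcd(\Mod_g^n)-\vcd(\Mod_g^0)=n+1$ is precisely large enough to cover both the $k\leq n$ new puncture-orbits and this single unit; this is why promoting the closed case to $n\geq 1$ needs no new hard input beyond \cref{AMP:g:atleast:three}, and I expect the orbifold bookkeeping, rather than any deep geometry, to be the only real obstacle.
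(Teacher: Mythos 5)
Your proposal is correct, but it takes a genuinely different route from the paper's. The paper argues purely algebraically: it restricts the Birman exact sequence $1\to B_n(S_g)\to \Mod_g^n\xrightarrow{\varphi}\Mod_g^0\to 1$ to the normalizer $NF$, uses subadditivity and monotonicity of $\vcd$ to get $\vcd(NF)+\lambda(F)\leq \vcd(N\varphi(F))+\vcd(B_n(S_g))+\lambda(\varphi(F))$, and then concludes with \cref{AMP:g:atleast:three} together with the external computation $\vcd(B_n(S_g))= n+1$. Your argument is instead geometric: Nielsen realization plus equivariant capping produces $\overline F\leq \Mod_g^0$, and you compare $\vcd(WF)$ with $\vcd(W\overline F)$ via \cref{lemma:Maher} and direct orbifold bookkeeping ($n_F-n_{\overline F}=k\leq n$, with the extra $+1$ absorbed by the jump in Harer's formula \cref{vcd:mcg} at $n=0$); this trades the braid-group $\vcd$ input for orbifold counting, and has the virtue of using exactly the toolkit the paper already deploys in the genus $0$, $1$, $2$ sections. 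Note that both proofs secretly need the same injectivity fact: the paper's replacement of $\lambda(F)$ by $\lambda(\varphi(F))$ is justified because $\ker\varphi=B_n(S_g)$ is torsion-free, while you obtain it from rigidity of finite-order diffeomorphisms --- so your treatment of this point is, if anything, more explicit than the paper's. One small correction: the rigidity statement you quote is false for genus $1$ (a translation of order $2$ of the torus is a nontrivial finite-order diffeomorphism isotopic to the identity); it holds for closed surfaces of genus at least $2$, equivalently $\chi<0$. Since you only apply it to $S_g$ with $g\geq 3$, this slip does not affect your proof, but the hypothesis should be stated correctly.
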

\begin{proof}
    Let us consider the Birman short exact sequence \cite[Theorem 4.3]{BIRMAN}
    \[1\to B_n(S_g) \to \modgn \xrightarrow{\varphi} \mathrm{Mod}_g^0\to 1\]
    where $B_n(S_g)$ is the full braid group over $S_g$ on $n$ strings.

    Let $F$ be any finite subgroup of $\modgn$. Then by restriction of the above sequence we obtain the following short exact sequence
    \[1\to B_n(S_g)\cap NF \to NF \xrightarrow{\varphi} \varphi(NF)\to 1\]
    and note that $\varphi(NF)\leq N\varphi(F)$. Therefore 
    \begin{align*}
        \vcd(NF)+\lambda(F) &\leq \vcd(\varphi(NF))+\vcd(B_n(S_g)\cap NF)+\lambda(F),\text{ by subaditivity of $\vcd$}\\
        &\leq \vcd(N(\varphi(F)))+\vcd(B_n(S_g))+\lambda(\varphi(F)),\text{ by monotonicity of $\vcd$}\\
        &\leq \vcd(\mathrm{Mod}_g^0)+\vcd(B_n(S_g)), \text{ by \cref{AMP:g:atleast:three}}\\
        &\leq 4g-5+n+1=4g+n-4=\vcd(\modgn), \text{by \cite[Theorem 1.2]{MR3869010}.}
    \end{align*}
    Now the result follows. The \textit{in particular} part follows directly from \cref{thm:aramayona:martinezperez}.
\end{proof}

\clearpage

\appendix
\section{$\lambda(F)$ and $\vcd(WF)$ for finite subgroups of $\Mod_0^n$ for $5\leq n\leq 13$}\label{appendixA}

 In this appendix we describe $n_F$, $\vcd(WF)$ and $\lambda(F)$ for all the conjugacy classes of  finite subgroups $F$ of $\Mod_0^n$ when $5\leq n\leq 13$.\\

\noindent{\bf Polyhedral subgroups of $\Mod_0^n$.} We keep the numeration from \cref{thm:classification:stukow} to analyze the polyhedral subgroups of $\Mod_0^n$ for $n=6,8,10,12$: 
\begin{itemize}
 \item[(3)] For $n=10$, up to conjugacy, there is a maximal finite subgroup $F\cong A_4$ of $\Mod_0^{n}$, which can be realized as the symmetry group of a tetrahedron. There are  three orbits of points with nontrivial stabiliser in $F$: the centers of faces, the centers of edges and the vertices with orbits of length $4$, $6$ and $4$, respectively. Notice that for $n=10$ there must be two orbits of punctures (one of length $4$ and another of length $6$) and one orbit of elliptics, hence $n_F=3$.
 \item[(4)] For $n=6,8,12$, up to conjugacy, there is a maximal finite subgroup $F\cong \mathfrak{S}_4$ of $\Mod_0^{n}$, which can be realized as the symmetry group of a cube (octahedron). There are  three orbits of points with nontrivial stabiliser in $F$: the centers of faces, the centers of edges and the vertices with stabilisers of order $4$, $2$ and $3$, respectively. Notice that for $n=6$ we must place the punctures in the centers of the faces (unique orbit of length $6$), for $n=8$ we must place the punctures in the vertices of the cube (unique orbit of length $8$) and for $n=12$ the punctures can only be placed in the centers of the edges (unique orbit of length $12$). Hence, in $S_0^n/F$ there is only one orbit of punctures and two orbits of elliptics and $n_F=3$. 

 Now let $F\cong A_4\leq \mathfrak{S}_4$. It can be realized as subgroup of the symmetry group of a tetrahedron embedded into a cube.  For $n=6$,  there are two orbits of vertices, one orbit of centers of faces (where we have placed the punctures)  and the centers of edges are no longer elliptics, hence $n_F=3$. For $n=8$,  there are two orbits of vertices (where we have placed the punctures), one orbit of centers of faces and the centers of edges are no longer elliptics, hence $n_F=3$. Finally, for $n=10$ there are two orbits of vertices, one orbit of centers of faces and one orbit of the centers of edges (where we have placed the punctures), , hence $n_F=4$.
 
 \item[(5)] For $n=12$, up to conjugacy, there is a maximal finite subgroup $F\cong A_5$ of $\Mod_0^{n}$, which can be realized as the symmetry group of a dodecahedron (icosahedron). There are three orbits of points with nontrivial stabiliser in $F$: the centers of faces, the centers of edges and the vertices. For $n=12$ we must place the punctures in the centers of the faces (unique orbit of length $12$). Hence, in $S_0^n/F$ there is only one orbit of punctures and two orbits of elliptics and $n_F=3$. 
\end{itemize}

We summarize the conclusions in \cref{Poly}. Notice that $\vcd(WF)+\lambda(F)\leq \vcd(\Mod_0^n)=n-3$ for $n=8,10,12$. However, $\vcd(W\mathfrak{S}_4)+\lambda(\mathfrak{S}_4)=4\not\leq \vcd(\Mod_0^6)=3$.

{\footnotesize
\begin{table}[ht]
    \centering
    {\caption{\small Polyhedral subgroups of $\Mod_0^n$, for $n=6,8,10,12$.}\label{Poly}} 
    \begin{tabular}{|c|c|c|c|c|}
        \hline  &&&&\\[-0.5em]
        $n$& $F$ & \textbf{${n_F}$} & \textbf{$\vcd(WF)$} & \textbf{$\lambda(F)$}\\   &&&&\\[-0.5em]
        \hline 
       $6$& $\mathfrak{S}_4$ & $3$ & $0$ & $4$ \\ 
        &$A_4$ &  $3$ & $0$& $3$ \\ 
         \hline 
      $8$& $\mathfrak{S}_4$ & $3$ & $0$ & $4$ \\ 
        &$A_4$ &  $3$ & $0$& $3$ \\ 
        \hline 
      $10$&  $A_4$ &  $3$ & $0$& $3$ \\ 
        \hline 
       $12$&   $\mathfrak{S}_4$ & $3$ & $0$ & $4$ \\ 
        & $A_4$ &  $4$ & $1$& $3$ \\ 
        & $A_5$ &  $3$ & $0$& $4$ \\ 
        \hline
    \end{tabular}

\end{table}  }\medskip

\clearpage
\noindent{\bf Cyclic and dihedral subgroups of $\Mod_0^n$.} The following tables for $5\leq n\leq 13$ come directly form \cref{prop:nF}.  Note that in all these cases $\vcd(WF)+\lambda(F)\leq \vcd(\Mod_0^n)=n-3$.

{\footnotesize
\begin{table}[ht]
    \centering
    \caption{\small Cyclic and dihedral subgroups of $\Mod_0^5$.}
    \begin{tabular}{|c|c|c|c|c|}
        \hline &&&&\\[-0.5em]
        \cref{prop:nF} type & $F$ & \textbf{$n_F$} & \textbf{$\vcd(WF)$} & \textbf{$\lambda(F)$}\\ &&&&\\[-0.5em]
        \hline
        (1) & $\mathbb{Z}/4$ & $3$ & $0$ & $2$ \\ 
        & $\mathbb{Z}/2$ & $4$ & $1$ & $1$ \\ 
        \hline
        (2.1) & $\mathbb{Z}/5$ & $3$ & $0$ & $1$ \\ 
        \hline
        (2.2) & $D_{2(5)}$ & $3$ & $0$ & $2$ \\ 
        \hline
        (3.1) & $\mathbb{Z}/3$ & $3$ & $0$ & $1$ \\ 
        \hline
        (3.2) & $D_{2(3)}$ & $3$ & $0$ & $2$ \\ 
        \hline
    \end{tabular}
\end{table}
}

{\footnotesize
\begin{table}[ht]
    \centering
    \caption{\small Cyclic and dihedral subgroups of $\Mod_0^6$.} 
    \begin{tabular}{|c|c|c|c|c|}
        \hline &&&&\\[-0.5em]
        \cref{prop:nF} type & $F$ & \textbf{$n_F$} & \textbf{$\vcd(WF)$} & \textbf{$\lambda(F)$}\\ &&&&\\[-0.5em]
        \hline
        (1) & $\mathbb{Z}/5$ & $3$ & $0$ & $1$ \\ 
        \hline
        (2.1) 
        & $\mathbb{Z}/6$ & $3$ & $0$ & $2$ \\
        & $\mathbb{Z}/3$ & $4$ & $1$ & $1$ \\
        & $\mathbb{Z}/2$ & $5$ & $2$ & $1$ \\
        \hline
        (2.2) & $D_{2(6)}$ & $3$ & $0$ & $3$ \\
        & $D_{2(3)}$ & $3$ & $0$ & $2$ \\ 
        &             & $4$ & $1$ & $2$ \\ 
        & $D_{2(2)}$ & $4$ & $1$ & $2$ \\ 
        \hline
        (3.1) & $\mathbb{Z}/4$ & $3$ & $0$ & $2$ \\
        & $\mathbb{Z}/2$   & $4$ & $1$ & $1$ \\
        \hline
        (3.2) & $D_{2(4)}$ & $3$ & $0$ & $3$ \\
        & $D_{2(2)}$ & $3$ & $0$ & $2$ \\
        &             & $4$ & $1$ & $2$ \\
        \hline
    \end{tabular}
\end{table}
}

{\footnotesize
\begin{table}[ht]
    \centering
    \caption{\small Cyclic and dihedral subgroups of $\Mod_0^7$.} 
    \begin{tabular}{|c|c|c|c|c|}
        \hline &&&&\\[-0.5em]
        \cref{prop:nF} type & $F$ & \textbf{$n_F$} & \textbf{$\vcd(WF)$} & \textbf{$\lambda(F)$}\\ &&&&\\[-0.5em]
        \hline
        (1) & $\mathbb{Z}/6$ & $3$ & $0$ & $2$ \\ 
        & $\mathbb{Z}/3$ & $4$ & $1$ & $1$ \\ 
        & $\mathbb{Z}/2$ & $5$ & $2$ & $1$ \\ 
        \hline
        (2.1) & $\mathbb{Z}/7$ & $3$ & $0$ & $1$ \\ 
        \hline
        (2.2) & $D_{2(7)}$ & $3$ & $0$ & $2$ \\ 
        \hline
        (3.1) & $\mathbb{Z}/5$ & $3$ & $0$ & $1$ \\ 
        \hline
        (3.2) & $D_{2(5)}$ & $3$ & $0$ & $2$ \\ 
        \hline
    \end{tabular}
\end{table}
}

{\footnotesize
\begin{table}[ht]
    \centering
    \caption{\small Cyclic and dihedral subgroups of $\Mod_0^8$.} 
    \begin{tabular}{|c|c|c|c|c|}
        \hline &&&&\\[-0.5em]
        \cref{prop:nF} type & $F$ & \textbf{$n_F$} & \textbf{$\vcd(WF)$} & \textbf{$\lambda(F)$}\\ &&&&\\[-0.5em]
        \hline
        (1) & $\mathbb{Z}/7$ & $3$ & $0$ & $1$ \\ 
        \hline
        (2.1) & $\mathbb{Z}/8$ & $3$ & $0$ & $3$ \\
        & $\mathbb{Z}/4$ & $4$ & $1$ & $2$ \\
        & $\mathbb{Z}/2$ & $6$ & $3$ & $1$ \\
        \hline
        (2.3) & $D_{2(8)}$ & $3$ & $0$ & $4$ \\
        & $D_{2(4)}$ & $3$ & $0$ & $3$ \\ 
        &            & $4$ & $1$ & $3$ \\ 
        & $D_{2(2)}$ & $4$ & $1$ & $2$ \\ 
        &            & $5$ & $2$ & $2$ \\ 
        \hline
        (3.1) & $\mathbb{Z}/6$ & $3$ & $0$ & $2$ \\
        & $\mathbb{Z}/3$ & $4$ & $1$ & $1$ \\
        & $\mathbb{Z}/2$ & $5$ & $2$ & $1$ \\
        \hline
        (3.2) & $D_{2(6)}$ & $3$ & $0$ & $3$ \\
        & $D_{2(3)}$ & $3$ & $0$ & $2$ \\
        &            & $4$ & $1$ & $2$ \\
        & $D_{2(2)}$ & $4$ & $1$ & $2$ \\
        \hline
    \end{tabular}
\end{table}
}

{\footnotesize
\begin{table}[ht]
    \centering
    \caption{\small Cyclic and dihedral subgroups of $\Mod_0^9$.} 
    \begin{tabular}{|c|c|c|c|c|}
        \hline &&&&\\[-0.5em]
        \cref{prop:nF} type & $F$ & \textbf{$n_F$} & \textbf{$\vcd(WF)$} & \textbf{$\lambda(F)$} \\ &&&&\\[-0.5em]
        \hline
        (1) & $\mathbb{Z}/8$ & $3$ & $0$ & $3$ \\
        & $\mathbb{Z}/4$ & $4$ & $1$ & $2$ \\
        & $\mathbb{Z}/2$ & $6$ & $3$ & $1$ \\
        \hline
        (2.1) & $\mathbb{Z}/9$ & $3$ & $0$ & $2$ \\
        & $\mathbb{Z}/3$ & $5$ & $2$ & $1$ \\
        \hline
        (2.2) & $D_{2(9)}$ & $3$ & $0$ & $3$ \\
        & $D_{2(3)}$ & $4$ & $1$ & $2$ \\
        \hline
        (3.1) & $\mathbb{Z}/7$ & $3$ & $0$ & $1$ \\
        \hline
        (3.2) & $D_{2(7)}$ & $3$ & $0$ & $2$ \\
        \hline
    \end{tabular}
\end{table}
}

{\footnotesize
\begin{table}[ht]
    \centering
    \caption{\small Cyclic and dihedral subgroups of $\Mod_0^{10}$.}
    \begin{tabular}{|c|c|c|c|c|}
        \hline &&&& \\[-0.5em]
        \text{\cref{prop:nF} type} & $F$ & \textbf{${n_F}$} & \textbf{$\vcd(WF)$} & \textbf{$\lambda(F)$} \\ 
        &&&& \\[-0.5em]
        \hline
        (1) & $\mathbb{Z}/9$ & $3$ & $0$ & $2$ \\ 
            & $\mathbb{Z}/3$ & $5$ & $2$ & $1$ \\ 
        \hline
        (2.1) & $\mathbb{Z}/10$ & $3$ & $0$ & $2$ \\
              & $\mathbb{Z}/5$ & $4$ & $1$ & $1$ \\
              & $\mathbb{Z}/2$ & $7$ & $4$ & $1$ \\
        \hline
        (2.2) & $D_{2(10)}$ & $3$ & $0$ & $3$ \\
              & $D_{2(5)}$ & $3$ & $0$ & $2$ \\ 
              &            & $4$ & $1$ & $2$ \\ 
              & $D_{2(2)}$ & $5$ & $2$ & $2$ \\ 
        \hline
        (3.1) & $\mathbb{Z}/8$ & $3$ & $0$ & $3$ \\
              & $\mathbb{Z}/4$ & $4$ & $1$ & $2$ \\
              & $\mathbb{Z}/2$ & $6$ & $3$ & $1$ \\
        \hline
        (3.2) & $D_{2(8)}$ & $3$ & $0$ & $4$ \\
              & $D_{2(4)}$ & $3$ & $0$ & $3$ \\
              &            & $4$ & $1$ & $3$ \\
              & $D_{2(2)}$ & $4$ & $0$ & $2$ \\
              &            & $5$ & $2$ & $2$ \\
        \hline
    \end{tabular}
\end{table}}

{\footnotesize
\begin{table}[ht]
    \centering
    \caption{\small Cyclic and dihedral subgroups of $\Mod_0^{11}$.}
    \begin{tabular}{|c|c|c|c|c|}
        \hline 
        &&&& \\[-0.5em]
        \text{\cref{prop:nF} type} & $F$ & \textbf{${n_F}$} & \textbf{$\vcd(WF)$} & \textbf{$\lambda(F)$} \\ 
        &&&& \\[-0.5em]
        \hline
        (1) & $\mathbb{Z}/10$ & $3$ & $0$ & $2$ \\ 
            & $\mathbb{Z}/5$  & $4$ & $1$ & $1$ \\ 
            & $\mathbb{Z}/2$  & $7$ & $4$ & $1$ \\ 
        \hline
        (2.1) & $\mathbb{Z}/11$ & $3$ & $0$ & $1$ \\ 
        \hline
        (2.2) & $D_{2(11)}$ & $3$ & $0$ & $2$ \\ 
        \hline
        (3.1) & $\mathbb{Z}/9$  & $3$ & $0$ & $2$ \\ 
              & $\mathbb{Z}/3$  & $5$ & $2$ & $1$ \\ 
        \hline
        (3.2) & $D_{2(9)}$ & $3$ & $0$ & $2$ \\ 
              & $D_{2(3)}$ & $4$ & $1$ & $1$ \\ 
        \hline
    \end{tabular}
\end{table}}

{\footnotesize
\begin{table}[ht]
    \centering
    \caption{\small Cyclic and dihedral subgroups of $\Mod_0^{12}$.}
    \begin{tabular}{|c|c|c|c|c|}
        \hline 
        &&&& \\[-0.5em]
        \text{\cref{prop:nF} type} & $F$ & \textbf{${n_F}$} & \textbf{$\vcd(WF)$} & \textbf{$\lambda(F)$} \\ 
        &&&& \\[-0.5em]
        \hline
        (1) & $\mathbb{Z}/11$ & $3$ & $0$ & $1$ \\ 
        \hline
        (2.1) & $\mathbb{Z}/12$ & $3$ & $0$ & $3$ \\ 
        & $\mathbb{Z}/6$ & $4$ & $1$ & $2$ \\ 
        & $\mathbb{Z}/4$ & $5$ & $2$ & $2$ \\ 
        & $\mathbb{Z}/3$ & $6$ & $3$ & $1$ \\ 
        & $\mathbb{Z}/2$ & $8$ & $5$ & $1$ \\ 
        \hline
        (2.2) & $D_{2(12)}$ & $3$ & $0$ & $4$ \\ 
        & $D_{2(6)}$ & $3$ & $0$ & $3$ \\ 
        &  & $4$ & $1$ & $3$ \\ 
        & $D_{2(4)}$ & $4$ & $1$ & $3$ \\ 
        & $D_{2(3)}$ & $4$ & $1$ & $2$ \\ 
        &  & $5$ & $2$ & $2$ \\ 
        & $D_{2(2)}$ & $5$ & $2$ & $2$ \\ 
        &  & $6$ & $3$ & $2$ \\ 
        \hline
        (3.1) & $\mathbb{Z}/10$ & $3$ & $0$ & $2$ \\ 
        & $\mathbb{Z}/5$ & $4$ & $1$ & $1$ \\ 
        & $\mathbb{Z}/2$ & $7$ & $4$ & $1$ \\ 
        \hline
        (3.2) & $D_{2(10)}$ & $3$ & $0$ & $3$ \\ 
        & $D_{2(5)}$ & $3$ & $0$ & $2$ \\ 
        &  & $4$ & $1$ & $2$ \\ 
        & $D_{2(2)}$ & $5$ & $2$ & $2$ \\ 
        \hline
    \end{tabular}
\end{table}}

{\footnotesize
\begin{table}[ht]
    \centering
    \caption{\small Cyclic and dihedral subgroups of $\Mod_0^{13}$.}
    \begin{tabular}{|c|c|c|c|c|}
        \hline 
        &&&& \\[-0.5em]
        \text{\cref{prop:nF} type} & $F$ & \textbf{${n_F}$} & \textbf{$\vcd(WF)$} & \textbf{$\lambda(F)$} \\ 
        &&&& \\[-0.5em]
        \hline
        (1) & $\mathbb{Z}/12$ & $3$ & $0$ & $3$ \\ 
        & $\mathbb{Z}/6$ & $3$ & $0$ & $3$ \\ 
        & $\mathbb{Z}/4$ & $5$ & $2$ & $2$ \\ 
        & $\mathbb{Z}/2$ & $8$ & $5$ & $1$ \\ 
        \hline
        (2.1) & $\mathbb{Z}/13$ & $3$ & $0$ & $1$ \\ 
        \hline
        (2.2) & $D_{2(13)}$ & $3$ & $0$ & $2$ \\ 
        \hline
        (3.1) & $\mathbb{Z}/11$ & $3$ & $0$ & $1$ \\ 
        \hline
        (3.2) & $D_{2(11)}$ & $3$ & $0$ & $2$ \\ 
        \hline
    \end{tabular}
\end{table}}
\clearpage

\section{$\lambda(F)$ and $\vcd(WF)$ for finite subgroups of $\Mod_2^n$.}\label{appendixB}
In this appendix we use Broughton's classification \cite[Theorem 4.1 $\&$ Table 4]{FiniteGenus2} to give upper bounds for $n_F$, $\vcd(WF)$ and $\lambda(F)$ for all the conjugacy classes of non-trivial finite subgroups $F$ of $\Mod_2^n$ when $n\geq 0$.

{\footnotesize
\begin{table}[ht]
    \centering
    { \caption{\small Finite non-trivial subgroups of $\Mod_2^n$.}\label{Genus2}}
    \begin{tabular}{|c|c|c|c|c|c|}
        \hline
        &&&&&\\[-0.5em]
    $F$ & $\rvert F\rvert$ & $S_2/F$ & \textbf{${n_F}\leq$} & \textbf{$\vcd(WF)\leq$} & \textbf{$\lambda(F)\leq$}\\
    &&&&&\\[-0.5em]
        \hline
         &&&&&\\[-0.5em]
       $\mathbb{Z}/2$  & $2$ & $(S_0;2,2,2,2,2,2)$ & $\frac{n}{2}+6$ & $\frac{n}{2}+3$ & $1$ \\ 
       &&&&&\\[-0.5em]
       \hline
 &&&&&\\[-0.5em]
       $\mathbb{Z}/2$  & $2$ & $(S_1;2,2)$ & $\frac{n}{2}+2$ & $\frac{n}{2}+2$ & $1$ \\ 
       &&&&&\\[-0.5em]
       \hline
       &&&&&\\[-0.5em]
       $\mathbb{Z}/3$  & $3$ & $(S_0;3,3,3,3)$ & $\frac{n}{3}+4$ & $\frac{n}{3}+1$ & $1$ \\ 
       &&&&&\\[-0.5em]
       \hline
       &&&&&\\[-0.5em]
        $\ \mathbb{Z}/2\times\mathbb{Z}/2\ $  & $4$ & $(S_0;2,2,2,2,2)$ & $\frac{n}{4}+5$ & $\frac{n}{4}+2$ & $2$ \\ 
        &&&&&\\[-0.5em]
       \hline
       &&&&&\\[-0.5em]
       $\mathbb{Z}/4$  & $4$ & $(S_0;2,2,4,4)$ & $\frac{n}{4}+4$ & $\frac{n}{4}+1$ & $2$ \\ 
       &&&&&\\[-0.5em]
       \hline
       &&&&&\\[-0.5em]
        $\mathbb{Z}/5$  & $5$ & $(S_0;5,5,5)$ & $\frac{n}{5}+3$ & $\frac{n}{5}$ & $1$ \\ 
        &&&&&\\[-0.5em]
       \hline
       &&&&&\\[-0.5em]
        $\mathbb{Z}/6$  & $6$ & $(S_0;3,6,6)$ & $\frac{n}{6}+3$ & $\frac{n}{6}$ & $2$ \\ 
        &&&&&\\[-0.5em]
       \hline
       &&&&&\\[-0.5em]
         $\mathbb{Z}/6$  & $6$ & $(S_0;2,2,3,3)$ & $\frac{n}{6}+4$ & $\frac{n}{6}$+1 & $2$ \\ 
         &&&&&\\[-0.5em]
       \hline
       &&&&&\\[-0.5em]
          $D_{2(3)}$  & $6$ & $(S_0;2,2,3,3)$ & $\frac{n}{6}+4$ & $\frac{n}{6}$+1 & $2$ \\ 
          &&&&&\\[-0.5em]
       \hline
       &&&&&\\[-0.5em]
     $\mathbb{Z}/8$   & $8$ & $(S_0;2,8,8)$ & $\frac{n}{8}+3$ & $\frac{n}{8}$ & $3$ \\ 
     &&&&&\\[-0.5em]
       \hline
       &&&&&\\[-0.5em]
      $\widetilde{D_2}$   & $8$ & $(S_0;4,4,4)$ & $\frac{n}{8}+3$ & $\frac{n}{8}$ & $2$ \\ 
       &&&&&\\[-0.5em]
       \hline
        &&&&&\\[-0.5em]
          $D_{2(4)}$  & $8$ & $(S_0;2,2,2,4)$ & $\frac{n}{8}+4$ & $\frac{n}{8}$+1 & $3$ \\ 
          &&&&&\\[-0.5em]
       \hline
            &&&&&\\[-0.5em]
        $\mathbb{Z}/10$   & $10$ & $(S_0;2,5,10)$ & $\frac{n}{10}+3$ & $\frac{n}{10}$ & $2$ \\ 
     &&&&&\\[-0.5em]
       \hline
           &&&&&\\[-0.5em]
        $\ \mathbb{Z}/2\times\mathbb{Z}/6\ $  & $12$ & $(S_0;2,6,6)$ & $\frac{n}{12}+3$ & $\frac{n}{12}$ & $3$ \\ 
         &&&&&\\[-0.5em]
        \hline
        &&&&&\\[-0.5em]
           $D_{4,3,-1}$  & $12$ & $(S_0;3,4,4)$ & $\frac{n}{12}+3$ & $\frac{n}{12}$ & $3$ \\ 
        &&&&&\\[-0.5em]
       \hline
        &&&&&\\[-0.5em]
          $D_{2(6)}$  & $12$ & $(S_0;2,2,2,3)$ & $\frac{n}{12}+4$ & $\frac{n}{12}+1$ & $3$ \\ 
          &&&&&\\[-0.5em]
       \hline
         &&&&&\\[-0.5em]
           $D_{2,8,3}$  & $16$ & $(S_0;2,4,8)$ & $\frac{n}{16}+3$ & $\frac{n}{16}$ &  $4$ \\ 
          &&&&&\\[-0.5em]
       \hline
          &&&&&\\[-0.5em]
         $\ \mathbb{Z}/2\rtimes (\mathbb{Z}/2\times\mathbb{Z}/2\times \mathbb{Z}/3)\ $  & $24$ & $(S_0;2,4,6)$ & $\frac{n}{24}+3$ & $\frac{n}{24}$ &  $4$ \\ 
          &&&&&\\[-0.5em]
       \hline
         &&&&&\\[-0.5em]
         $\text{SL}_2(3)$ & $24$ & $(S_0;3,3,4)$ & $\frac{n}{24}+3$ & $\frac{n}{24}$ &  $4$ \\ 
          &&&&&\\[-0.5em]
       \hline
        &&&&&\\[-0.5em]
         $\text{GL}_2(4)$ & $48$ & $(S_0;2,3,8)$ & $\frac{n}{48}+3$ & $\frac{n}{48}$ & $5$ \\ 
          &&&&&\\ 
       \hline
       
        \end{tabular}
\end{table}}

\clearpage

\bibliographystyle{alpha} 
\bibliography{mybib}
\end{document}